\newtheorem{theorem}{Theorem}
\newtheorem*{theorem*}{Theorem}
\newtheorem{lemma}[theorem]{Lemma}
\newtheorem{proposition}[theorem]{Proposition}
\newtheorem*{corollary*}{Corollary}
\newtheorem*{question*}{Question}
\newtheorem*{conjecture*}{Conjecture}
\newtheorem*{proposition*}{Proposition}
\newtheorem{definition}[theorem]{Definition}
\theoremstyle{remark}
\newtheorem{remark}[theorem]{Remark}
\newtheorem*{remark*}{Remark}
\newcommand{\cF}{\mathcal{F}}
\newcommand{\cG}{\mathcal{G}}
\newcommand{\cH}{\mathcal{H}}
\newcommand{\cK}{\mathcal{K}}
\newcommand{\cO}{\mathcal{O}}
\newcommand{\cP}{\mathcal{P}}
\newcommand{\cQ}{\mathcal{Q}}
\newcommand{\cS}{\mathcal{S}}
\newcommand{\cV}{\mathcal{V}}
\newcommand{\bbG}{\mathbb{G}}
\newcommand{\bbQ}{\mathbb{Q}}
\newcommand{\lto}{\longrightarrow} 
\newcommand{\iso}{\cong} 
\newcommand{\isoto}{\overset{\simeq}{\longrightarrow}} 
\newcommand{\leftiso}{\overset{\simeq}{\longleftarrow}} 
\DeclareMathOperator{\Aut}{Aut}
\newcommand{\B}{\mathrm{B}} 
\DeclareMathOperator{\Coker}{Coker}
\DeclareMathOperator{\Div}{Div}
\DeclareMathOperator{\End}{End}
\DeclareMathOperator{\Ho}{Ho} 
\DeclareMathOperator{\Hom}{Hom}
\DeclareMathOperator{\HOM}{\mathbf{Hom}}
\DeclareMathOperator{\id}{id}
\DeclareMathOperator{\Isom}{Isom}
\DeclareMathOperator{\Ker}{Ker}
\DeclareMathOperator{\Obj}{Obj}
\DeclareMathOperator{\Pic}{Pic}
\DeclareMathOperator{\Spec}{Spec}
\newcommand{\CHcat}{\mathbf{CH}} 
\newcommand{\C}{\mathsf{C}} 
\newcommand{\D}{\mathsf{D}} 
\newcommand{\T}{\mathsf{T}} 
\newcommand{\mathsc}[1]{{\normalfont\textsc{#1}}}
\DeclareMathOperator{\stCorr}{\mathsc{Corr}} 
\DeclareMathOperator{\stEnd}{\mathsc{End}} 
\DeclareMathOperator{\gerb}{\mathbf{Gerb}} 
\DeclareMathOperator{\stGerb}{\mathsc{Gerb}} 
\DeclareMathOperator{\stHom}{\mathsc{Hom}} 
\DeclareMathOperator{\tors}{\mathbf{Tors}} 
\DeclareMathOperator{\stTors}{\mathsc{Tors}} 
\DeclareMathOperator{\stOp}{\mathsc{Op}}
\newcommand{\stF}{\cF}
\newcommand{\stG}{\cG}
\newcommand{\stH}{\cH}
\newcommand{\stP}{\cP}
\newcommand{\stQ}{\cQ}
\newcommand{\stS}{\cS}
\newcommand{\stCH}{\mathcal{CH}}
\newcommand{\Br}{\mathrm{Br}}
\newcommand{\CH}{\mathrm{CH}} 
\renewcommand{\H}{\mathrm{H}} 
\newcommand{\HH}{\mathbf{H}} 
\newcommand{\R}{\mathrm{R}} 
\newcommand{\RR}{\mathbf{R}}
\newcommand{\LL}{\mathbf{L}}
\DeclarePairedDelimiter\abs{\lvert}{\rvert}
\DeclarePairedDelimiter\<{\langle}{\rangle}
\newcommand{\dirlim}{\varinjlim}
\newcommand{\et}{\mathit{\acute{e}t}}
\newcommand{\pt}{\mathit{pt}} 
\newcommand{\ie}{i.e.\xspace}
\newcommand{\eg}{e.g.\xspace}
\newcommand{\loccit}{{loc.\,cit.}\xspace}
\title{Fiber integration of gerbes and Deligne line bundles}
\author{Ettore Aldrovandi and Niranjan Ramachandran} 
\address{Ettore Aldrovandi, Department of Mathematics, Florida State University, Tallahassee, FL 32306-4510 USA.}
\email{aldrovandi@math.fsu.edu}
\urladdr{http://www.math.fsu.edu/~ealdrov}
\address{Niranjan Ramachandran, Department of Mathematics, University of Maryland, College Park, MD 20742 USA.}
\email{atma@math.umd.edu}
\urladdr{http://www2.math.umd.edu/~atma/}
\subjclass[2010]{14C25, 14F42, 55P20, 55N15} 
\date{\today}
\keywords{Algebraic cycles, gerbes, higher categories}
\begin{document}

\begin{abstract}
  Let $\pi: X \to S$ be a family of smooth projective curves, and let $L$ and $M$ be a pair of line bundles on $X$.

  We show that Deligne's line bundle $\< {L,M}$ can be obtained from the $\cK_2$-gerbe $G_{L,M}$ constructed in \cite{ER} via an integration along the fiber map for gerbes that categorifies the well known one arising from the Leray spectral sequence of $\pi$. Our construction provides a full account of the biadditivity properties of $\< {L,M}$.

Our main application is to the categorification of correspondences on the self-product of a curve.

  The functorial description of the low degree maps in the Leray spectral sequence for $\pi$ that we develop is of independent interest, and, along the way, we provide an example of their application to the Brauer group.
\end{abstract}

\maketitle


\section{Introduction}

Let $S$ be a smooth variety over a field $F$, and let $\pi:X \to S$ be a smooth projective morphism of relative dimension one.  Deligne \cite{SGA4, MR902592} has constructed a bi-additive functor of Picard categories 
\[
  \Psi_{X/S} \colon \tors_X(\bbG_m) \times \tors_X(\bbG_m) \to \tors_S(\bbG_m), \quad \Psi_{X/S}(L,M) = \<{L,M}\,,
\]
where the bi-additivity means that there are natural isomorphisms
\[
  \<{L+L', M} \isoto \<{L,M} + \<{L',M}\,,\quad \<{L,M} \isoto \<{M,L}\,.
\]
Even though there are multiple approaches to $\Psi_{X/S}$ (see \S \ref{deligne-as-det}) the  proof of bi-additivity is non-trivial in each one of them.

Let $\cK_{j}$ be the usual Zariski sheaf attached to the presheaf $U \mapsto K_j(U)$ on $X$. A basic result of Bloch-Quillen is that $H^j(X, \cK_j)$ is isomorphic to the Chow group $CH^j(X)$ of codimension-$j$ cycles on $X$.  Our main result is the following:\footnote{Theorem \ref{Main} was conjectured by M.~Patnaik \cite[Remark 21.3.2]{Patnaik}.}

\begin{theorem}
  \label{Main}
  The functor $\Psi_{X/S}$ factorizes as a composition of a bi-additive functor $\cup$ and an additive functor $\int_{\pi}$:
  \[
    \tors_X(\bbG_m) \times \tors_X(\bbG_m) \overset{\cup}{\lto}
    \gerb_X(\cK_2) \xrightarrow{\int_{\pi}}  \tors_S(\bbG_m)\,.
  \]
\end{theorem}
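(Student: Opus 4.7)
The plan is to break the theorem into three steps: realize the cup-product functor via the $\cK_2$-gerbe of \cite{ER}, construct the fiber-integration functor $\int_\pi$ as a categorification of the Leray edge map, and match the composite with Deligne's $\Psi_{X/S}$. For the cup product, take $\cup(L,M) := G_{L,M}$, the $\cK_2$-gerbe from \cite{ER}. By construction it is bi-additive in $(L,M)$, and its class in $H^2(X,\cK_2)$ is the Zariski cup product $[L]\cup[M]$ coming from the pairing $\cK_1\otimes\cK_1 \to \cK_2$ in K-theory. Thus the non-trivial work concentrates in producing $\int_\pi$ and comparing $\int_\pi \circ \cup$ with $\Psi_{X/S}$.

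For $\int_\pi$, consider the Leray spectral sequence
\[
  E_2^{p,q} = H^p(S, R^q\pi_*\cK_2) \Longrightarrow H^{p+q}(X, \cK_2).
\]
Since $\pi$ has relative dimension one, $R^q\pi_*\cK_2 = 0$ for $q \geq 2$, while a Bloch-Quillen/Gersten computation gives $R^1\pi_*\cK_2 \iso \bbG_m$ on $S$. On cohomology one then obtains a natural edge map $H^2(X, \cK_2) \to H^1(S, \bbG_m) = \Pic(S)$. The functorial version of the five-term sequence developed earlier in the paper (advertised in the abstract as being of independent interest) applies to promote this into an additive functor
\[
  \int_\pi \colon \gerb_X(\cK_2) \lto \tors_S(\bbG_m)
\]
of Picard categories, which recovers the above map on isomorphism classes.

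For the comparison, I would verify $\int_\pi(G_{L,M}) \simeq \<{L,M}$ locally on $S$. On a cover that trivializes $L$ and $M$ over neighborhoods of each fiber, the gerbe $G_{L,M}$ is described by an explicit $\cK_2$-valued Čech cocycle built from the transition data of $L$ and $M$. The categorified edge map sends this cocycle to a tame-symbol cocycle valued in $\bbG_m$ on $S$, which is precisely the cocycle defining Deligne's pairing $\<{L,M}$. Matching these local descriptions coherently, together with their commutativity and bi-additivity constraints, yields the required equivalence of bi-additive functors; the bi-additivity of $\Psi_{X/S}$ then follows cleanly, inherited from that of $\cup$.

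The main obstacle is the construction of $\int_\pi$ as a genuine functor of Picard categories rather than merely a map on equivalence classes of gerbes. A functorial lift of the Leray edge map must track local trivializations and gluing $2$-cocycles coherently under pushforward, and this is exactly what the functorial five-term sequence of the paper is designed to accomplish. Once that machinery is in place, identifying the output with Deligne's pairing is reduced to an explicit, if extensive, cocycle comparison on a suitable cover.
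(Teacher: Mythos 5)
Your overall architecture --- cup-product gerbe $G_{L,M}$, categorified Leray edge map, comparison with Deligne's pairing --- is the same as the paper's, but two of your steps would fail as written. First, the claim that a Gersten computation gives $\R^1\pi_*\cK_2 \iso \bbG_m$ is false in general: the stalk of $\R^1\pi_*\cK_2$ at the generic point of $S$ is $\H^1(X_\eta,\cK_2)$ for the generic fiber $X_\eta$, a curve over $F(S)$, and this group surjects onto $F(S)^*$ by the norm but has a kernel (the ``$V(C)$'' part of $\H^1$ of a curve with $\cK_2$-coefficients) that does not vanish in general. The paper never identifies the two sheaves; it constructs a norm map $\mathrm{Norm}\colon \R^1\pi_*\cK_2 \to \cO_S^*$ out of the Gersten resolution (via the field norms $k(x)^*\to F(S)^*$ for codimension-one points $x$ of $X$ finite over their image) and defines $\int_\pi$ as the composite of $\Theta\colon \stG\mapsto \pi_0(\pi_*\stG)$ with pushforward of torsors along $\mathrm{Norm}$. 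Without that extra step your $\int_\pi$ does not land in $\tors_S(\bbG_m)$.

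Second, the vanishing $\R^q\pi_*\cK_2=0$ for $q\ge 2$ is not a formal consequence of $\pi$ having relative dimension one: $\pi^{-1}(U)$ has dimension $\dim S+1$, and $\H^2(\pi^{-1}(U),\cK_2)=\CH^2(\pi^{-1}(U))$ is typically nonzero. The vanishing is a statement about the sheafification --- every codimension-two cycle on $\pi^{-1}(U)$ dies after shrinking $U$ --- and the paper proves it (Proposition~\ref{beilinson-lemma}, after Beilinson--Schechtman) by a moving argument: replace the cycle by one meeting $\pi^{-1}(V)$ properly and remove the image of the intersection. This is exactly what makes every $\cK_2$-gerbe horizontal, i.e.\ makes $\pi_0(\pi_*G_{L,M})$ a genuine torsor rather than a pseudo-torsor, so it cannot be waved through. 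Finally, your comparison step presumes Deligne's pairing is ``defined by a cocycle''; it is not, and the paper instead reduces, via the characterization $N_{D/S}(M|_D)\iso\<{L,M}$ of \eqref{norm-finite} for $D$ the divisor of a section of $L$, to the commutativity of an explicit diagram relating the localization boundary $\lambda$, the cup product, $\theta$, and the two norms. That reduction is the missing bridge between your tame-symbol computation and $\Psi_{X/S}$.
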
 
In the statement, $\gerb_X(\cK_{2})$ denotes the Picard 2-category of gerbes on $X$ with band $\cK_{2}$; the bi-additivity of $\Psi_{X/S}$ is a consequence of the bi-additivity of the cup-product. The functor $\int_{\pi}$  categorifies the pushforward map
\[
    \pi_*: \CH^2(X) \to \CH^1(S)\,,
\]
and it represents \cite[XVIII \S 1.3]{SGA4} the integration of a gerbe along the fibers of $\pi$.  
Thus, $\Psi_{X/S}$ is actually a categorification of the pairing
\[
  \CH^1(X) \times \CH^1(X) \overset{\cup}{\longrightarrow} \CH^2(X)
  \overset{\pi_*}{\longrightarrow} \CH^1(S)\,.
\]
The proof of Theorem \ref{Main} is obtained by combining the following Theorems \ref{prop1}, \ref{prop2}, and \ref{prop3}:
\begin{theorem}
  \label{prop1}
  On any smooth variety $Y$ over $F$, there exists a natural bi-additive functor 
  \[
    \tors_Y(\bbG_m) \times \tors_Y(\bbG_m) \overset{\cup}{\lto}
    \gerb_Y(\cK_2).
  \]
\end{theorem}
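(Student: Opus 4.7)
The plan is to categorify the cup-product pairing $H^1(Y,\cK_1) \times H^1(Y,\cK_1) \to H^2(Y,\cK_2)$ that comes from the multiplicative structure on sheaves of $K$-theory, namely the Steinberg--Loday symbol pairing $\cK_1 \otimes \cK_1 \to \cK_2$ sending $(a,b) \mapsto \{a,b\}$. Since $\tors_Y(\bbG_m)$ and $\gerb_Y(\cK_2)$ are the Picard $1$- and $2$-category truncations of $\R\Gamma(Y,\cK_1[1])$ and $\R\Gamma(Y,\cK_2[2])$, the desired functor $\cup$ is in principle induced by this pairing; I would construct it explicitly by Čech cocycles, so that the resulting gerbe is usable in the subsequent parts of the paper.

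Given line bundles $L, M$ on $Y$, fix a Zariski open cover $\cU = \{U_i\}$ trivializing both, and let $\{f_{ij}\}, \{g_{ij}\}$ be the resulting $\bbG_m$-valued $1$-cocycles. Set
\[
  c_{ijk} := \{f_{ij}, g_{jk}\} \in \cK_2(U_{ijk}).
\]
Bi-multiplicativity of the symbol converts the $1$-cocycle identities for $f$ and $g$ into the $2$-cocycle identity for $c$. The gerbe $L \cup M$ is then defined as the $\cK_2$-gerbe canonically attached to the $2$-cocycle $c$ on $\cU$ (for instance, as its stack of local trivializations), so that its class in $H^2(Y,\cK_2)$ is by construction the cup product $[L] \cup [M]$. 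Independence up to canonical equivalence from the choice of cover and local frames is obtained by the usual refinement argument. A morphism $L \to L'$ is encoded by a $0$-cochain $\{h_i\}$ with $f'_{ij} = h_i^{-1} f_{ij} h_j$; the $1$-cochain $\{h_i, g_{ij}\}$ then trivializes the difference of $2$-cocycles and yields an equivalence $L \cup M \simeq L' \cup M$ of $\cK_2$-gerbes. The second variable is handled symmetrically.

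Bi-additivity in the first slot comes from the multiplicativity $\{f f', g\} = \{f,g\} + \{f',g\}$ of the symbol, which at the cocycle level furnishes a natural equivalence $(L \otimes L') \cup M \simeq (L \cup M) + (L' \cup M)$ in $\gerb_Y(\cK_2)$, and similarly in the second slot. The main obstacle is not the existence of these natural equivalences but the \emph{coherence data} turning $\cup$ into a genuine bi-additive morphism of Picard $2$-categories: there are associativity, unit, and interchange $2$-morphisms to produce, all of which must be verified to satisfy the appropriate higher-cell axioms. The cleanest way to manage this bookkeeping is to invoke the spectrum-level picture --- the map $\cK_1 \otimes^{\bbL} \cK_1 \to \cK_2$ in the derived category of sheaves of spectra on $Y$ induces, after applying $\R\Gamma$ and Postnikov truncation, a bi-additive functor of Picard higher categories, with all coherences automatic --- while the Čech cocycle description above provides the concrete model to be fed into the remaining theorems.
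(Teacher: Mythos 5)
Your proposal is essentially correct but takes a genuinely different route from the paper. The paper does not build the gerbe from a Čech $2$-cocycle; instead it uses the Heisenberg sheaf $H_{A,B}$, the central extension $0\to A\otimes B\to H_{A,B}\to A\times B\to 0$ with group law $(a,b,t)(a',b',t')=(aa',bb',t+t'+a\otimes b')$, and defines $G_{P,Q}$ as the $A\otimes B$-gerbe of local liftings of the $A\times B$-torsor $P\times Q$ to an $H_{A,B}$-torsor; Theorem~\ref{prop1} is then the composite of $c_{\bbG_m,\bbG_m}$ with the change-of-band functor along $\mu\colon \bbG_m\otimes\bbG_m\to\cK_2$. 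This construction is cover-free, so functoriality in $(L,M)$ is automatic, whereas in your Čech model the assignment $L\mapsto (L\cup M)$ depends on choices of cover and local frames, and promoting your equivalences $L\cup M\simeq L'\cup M$ to an honest (pseudo)functor compatible with composition of morphisms requires additional bookkeeping that you only gesture at. For the bi-additivity coherence, the paper argues in the universal case over $K(A\times B,1)$: the classifying cocycle of the extension is $f(a,b,a',b')=a\otimes b'$, whose bilinearity yields $(+_A)^*H_{A,B}\iso p_1^*H_{A,B}+p_2^*H_{A,B}$ as central extensions, and the interchange/coherence diagram of biextension type follows by a further pullback along $+_B$. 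Your alternative—deducing all coherences from the pairing $\cK_1\otimes^{\LL}\cK_1\to\cK_2$ of sheaves of spectra after $\R\Gamma$ and Postnikov truncation—is plausible and would indeed make the coherences automatic, but it is substantially heavier machinery, and you would still need to identify the truncation of that pairing with your explicit Čech model (the formula $c_{ijk}=\{f_{ij},g_{jk}\}$ does agree with the cup-product cocycle the paper uses later in its comparison with Deligne's construction, so the two constructions are compatible on classes). In short: same underlying idea (bilinearity of the symbol categorifies to bi-additivity), but the paper's Heisenberg-extension argument is more elementary and self-contained, while yours trades a concrete coherence verification for an appeal to stable homotopy theory.
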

This is essentially proved in \cite{ER}, but for the biadditivity, which we address below. Biadditivity or additivity is straightforward, but we must contend with the fact that some of the entities involved are higher categories or stacks.

Let $G_{L,M}$ be the $\cK_2$-gerbe corresponding to the cup-product of line bundles $L$ and $M$ on $X$ (viewed as $\bbG_m$-torsors). 
\begin{theorem}
  \label{prop2}
  For $\pi\colon X \to S$ as above of relative dimension one, there exists a natural additive functor 
  \[
    \int_\pi \colon 
    \gerb_X(\cK_2) \lto \tors_S(\bbG_m) \,.
  \]
\end{theorem}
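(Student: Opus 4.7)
The plan is to construct $\int_\pi$ as a categorification of the low-degree edge map $H^2(X, \cK_2) \to H^1(S, \R^1\pi_*\cK_2)$ in the Leray spectral sequence of $\pi$ applied to $\cK_2$, combined with the K-theoretic identification $\R^1\pi_*\cK_2 \iso \bbG_m$ on $S$. First I would compute the relevant direct images: since $\pi$ is smooth projective of relative dimension one, pushing down the Gersten resolution of $\cK_2$ on $X$ yields (i) the vanishings needed to make the Leray filtration in degree two a single step (so that $H^2(X,\cK_2)$ surjects onto $H^1(S,\R^1\pi_*\cK_2)$), and (ii) a canonical isomorphism $\R^1\pi_*\cK_2 \iso \bbG_m$ on $S$. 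The latter globalizes the tame symbol: on stalks it recovers $H^1(X_s, \cK_2) \iso \kappa(s)^\times$ for a smooth projective curve $X_s$ over $\kappa(s)$.

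Next, I would categorify the Leray edge map itself. For an abelian sheaf $A$ on $X$ with sufficiently small lower direct image along $\pi$, the plan is to promote the classical edge map to a natural additive functor
\[
  \gerb_X(A) \longrightarrow \tors_S(\R^1\pi_*A)\,,
\]
using the functorial description of low-degree Leray maps that the introduction advertises as of independent interest. Concretely, given a gerbe $\stG$ on $X$ with band $A$, one chooses local trivializations of $\stG$ over an open cover pulled back from $S$; the $A$-valued comparison cocycles over double overlaps push forward along $\pi$ and, together with the coherence isomorphisms that parametrize changes of trivialization, assemble into an $\R^1\pi_*A$-torsor on $S$. Naturality in $\stG$ and in the cover upgrades this assignment to a functor of Picard 2-categories. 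Specializing $A = \cK_2$ and transporting the output along the equivalence $\tors_S(\R^1\pi_*\cK_2) \simeq \tors_S(\bbG_m)$ from Step 1 yields $\int_\pi$.

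Additivity follows formally from the naturality of the construction and the fact that the edge map itself is a homomorphism of abelian groups, so the Picard 2-category structure is respected. The principal obstacle is the second step: setting up the 2-categorical coherence so that the local trivializations, gluing data, and their compatibility isomorphisms assemble strictly into an additive functor rather than merely a map of equivalence classes. This is precisely where the paper's reformulation of the low-degree terms of the Leray spectral sequence is doing the real work. Step 1 is more computational but also demands care: the isomorphism $\R^1\pi_*\cK_2 \iso \bbG_m$ must be established compatibly with base change along $S$, so that $\int_\pi$ restricts correctly and, together with the cup product of Theorem~\ref{prop1}, accounts for the biadditivity of $\Psi_{X/S}$.
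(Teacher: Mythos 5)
Your overall strategy matches the paper's: categorify the Leray edge map $E^2_1 \to E^{1,1}_2$ into a functor $\gerb_X(\cK_2) \to \tors_S(\R^1\pi_*\cK_2)$ (the paper does this intrinsically, sending a gerbe $\stG$ to $\pi_0(\pi_*\stG)$, which by Lemma~\ref{lem:exercise} is an $\R^1\pi_*\cK_2$-torsor exactly when $\stG$ is horizontal; your cover-and-cocycle description is the same idea in local coordinates), and then pass from $\R^1\pi_*\cK_2$-torsors to $\bbG_m$-torsors. The genuine gap is in the second step: you assert a canonical isomorphism $\R^1\pi_*\cK_2 \iso \bbG_m$ and propose to transport torsors along the resulting equivalence $\tors_S(\R^1\pi_*\cK_2)\simeq\tors_S(\bbG_m)$. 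No such isomorphism holds in general. The stalk of $\R^1\pi_*\cK_2$ at $s\in S$ is $\varinjlim_{s\in U}\H^1(\pi^{-1}(U),\cK_2)$, not $\H^1(X_s,\cK_2)$, and even for a single smooth projective curve $C$ over a field $k$ the norm map $\H^1(C,\cK_2)\to k^*$ is in general only surjective; its kernel is an $SK_1$-type group which need not vanish when $k$ is merely a finitely generated extension of $F$, which is exactly what the residue fields of $S$ are. The paper avoids this entirely: it constructs only a norm \emph{homomorphism} $\R^1\pi_*\cK_2 \to \cO_S^*$ (via the Gersten resolution, using the norms $k(x)^*\to F(S)^*$ for codimension-one points $x$ of $X$ finite over the generic point of $S$, and checking that the product of norms of a Gersten $1$-cycle is regular on $U$), and defines the second half of $\int_\pi$ as pushforward of torsors along this homomorphism — an additive functor for any map of abelian sheaves, requiring no isomorphism.

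A second, smaller issue: the vanishing needed to make every $\cK_2$-gerbe horizontal, namely $\R^2\pi_*\cK_2=0$, is not a formal consequence of pushing down the Gersten resolution. Since that resolution is flasque, $\R^2\pi_*\cK_2$ is the sheafification of $U\mapsto \CH^2(\pi^{-1}(U))$, and one must actually show that any codimension-two cycle on $\pi^{-1}(U)$ becomes rationally trivial after shrinking $U$. This is the content of Proposition~\ref{beilinson-lemma} (Beilinson--Schechtman), proved by a moving argument: replace the cycle by one meeting $\pi^{-1}(V)$ properly for the relevant subvariety $V\subseteq S$, observe that the image in $S$ of the resulting intersection has too small a dimension to contain $V$, and remove it. You should treat this as a real geometric input rather than a routine computation; with that and with the isomorphism replaced by the norm homomorphism, your construction coincides with the paper's.
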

The proof of Theorem~\ref{prop2} consists in writing the maps in the low degree part of the Leray spectral sequence for $\pi\colon X\to S$ directly in terms of the (higher) stacks they classify. While this can be traced back in some implicit form to \cite[\S V.3.1-2]{Giraud}, we reprise it here as we need in particular an explicit description of the functors involved. In particular, the integration map is given by taking the sheaf of connected components of the pushforward gerbe from $X$ to $S$. We describe the integration map in greater generality, by working with a general site morphism.

Finally, we have
\begin{theorem}\label{prop3} 
  One has a natural isomorphism 
  \[\int_{\pi} G_{L,M} \iso \<{L,M}\>.\]
\end{theorem}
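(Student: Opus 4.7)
The plan is to reduce the theorem to a comparison of explicit cocycles and ultimately to a local model in which both sides have transparent descriptions. At the level of isomorphism classes in $\Pic(S) = H^1(S,\bbG_m)$, the identity $[\int_\pi G_{L,M}] = [\<{L,M}]$ is standard: by Theorem~\ref{prop1}, $[G_{L,M}]$ equals $c_1(L)\cup c_1(M)$ in $H^2(X,\cK_2)\iso \CH^2(X)$; by Theorem~\ref{prop2}, $\int_\pi$ realizes the Leray edge map $H^2(X,\cK_2)\to H^1(S,R^1\pi_*\cK_2)$ composed with the identification $R^1\pi_*\cK_2 \iso \bbG_m$ coming from the relative Gersten resolution, which is precisely the Chow pushforward $\pi_*\colon\CH^2(X)\to\CH^1(S)$; and Deligne's pairing is known to represent $\pi_*(c_1(L)\cdot c_1(M))$. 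The content of Theorem~\ref{prop3} is to upgrade this equality of classes to a canonical natural isomorphism.

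I would proceed in three steps. First, unwind $G_{L,M}$ using \cite{ER}: on an open cover trivializing $L$ and $M$ with transition units $f_{ij}$ and $g_{ij}$, the gerbe is presented by a 2-cocycle in $\cK_2$ built from Steinberg symbols $\{f_{ij},g_{jk}\}$. Second, apply $\int_\pi$ following the recipe of Theorem~\ref{prop2}: push forward this 2-cocycle, take the tame-symbol boundary along the fibers of $\pi$ afforded by the relative Gersten resolution, and read off the resulting sheaf of connected components as a $\bbG_m$-torsor on $S$ with an explicit Čech 1-cocycle. Third, compare with a standard Čech presentation of $\<{L,M}$; the arithmetic identity powering the comparison is the reciprocity between the tame symbol on a smooth curve and the norm of a rational function along a finite map, which is the same identity underlying Deligne's original construction.

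The main obstacle is canonicity: producing an isomorphism that is independent of the auxiliary cover and trivializations. I would discharge this by reducing to the universal local model in which $L = \cO_X(D)$ and $M = \cO_X(E)$ for disjoint relative Cartier divisors $D$, $E$ étale over $S$. In this model the Bloch--Quillen identification represents $[G_{L,M}]$ by the zero-cycle class $[D\cdot E]$, and $\int_\pi G_{L,M}$ identifies with $\cO_S(\pi_*(D\cdot E))$, matching the classical formula $\<{\cO_X(D), \cO_X(E)}\iso \cO_S(\pi_*(D\cap E))$. Every pair $(L,M)$ is Zariski-locally on $S$ representable in this form, so the functoriality guaranteed by Theorems~\ref{prop1} and~\ref{prop2} transports the local comparison to a global natural isomorphism. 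Compatibility with the bi-additivity on both sides then follows automatically, since, as noted in the introduction, the bi-additivity of $\<{-,-}$ is inherited from that of the cup product via Theorem~\ref{Main}.
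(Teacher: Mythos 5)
Your overall strategy---present $G_{L,M}$ by a \v{C}ech $2$-cocycle of Steinberg symbols, compute $\int_\pi$ via the tame-symbol boundary in the Gersten resolution, and match the resulting $1$-cocycle on $S$ against Deligne's pairing through norms and reciprocity---is the same computational core as the paper's proof, which packages exactly this comparison as the commutativity of the diagram (\ref{diagram}), whose two columns are $N_{D/S}\circ(\text{restriction to }D)$ and $\mathrm{Norm}\circ\theta\circ\cup$. The genuine gap is in your reduction step. You propose to reduce to a ``universal local model'' with $L=\cO(D)$, $M=\cO(E)$ for \emph{disjoint} relative Cartier divisors $D$, $E$ \'etale over $S$. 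If $D$ and $E$ are disjoint then $D\cdot E=0$, so in that model both $\int_\pi G_{L,M}$ and $\<{L,M}$ are canonically trivial and the comparison is vacuous; the entire content of the theorem is then pushed into gluing these trivial local identifications over $S$, which your final sentence (``functoriality transports the local comparison to a global natural isomorphism'') does not supply. The local isomorphisms depend on non-canonical choices of $(D,E)$, and showing they agree on overlaps is equivalent to the cocycle comparison you were trying to localize away.

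The paper discharges canonicity differently: it invokes Deligne's characterization of $\Psi_{X/S}$ (functoriality together with the canonical isomorphism $N_{D/S}(M|_D)\iso\<{L,M}$ for a section of $L$ with zero divisor $D$), so that it suffices to verify the \emph{same} norm formula for the candidate $(L,M)=\int_\pi G_{L,M}$, for a single effective relative Cartier divisor $D$ and arbitrary $E$---no disjointness, no \'etaleness, and no gluing over $S$. That verification is precisely (\ref{diagram}): the top triangle identifies the localization boundary of $\cO(E)|_D$ with the symbol cocycle $(a_{ij},b_{jk})$ representing $\cO(D)\cup\cO(E)$, and the bottom square identifies $\mathrm{Norm}\circ\theta$ of that class with $N_{D/S}(\cO(E)|_D)$. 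If you replace ``disjoint'' by ``intersecting properly over $S$'' your local model becomes the honest statement $\<{\cO(D),\cO(E)}\iso\cO_S(\pi_*(D\cdot E))$, but even then you would need either the characterization-by-norms argument or an explicit independence-of-choices check to upgrade an equality of classes in $\H^1(S,\cO^*)$ to a \emph{natural} isomorphism.
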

Let $Y$ be a smooth proper variety $Y$ over $F$; let $\CH^*(Y) = \oplus_j\CH^j(Y)$ for the total Chow group of $Y$. Recall that one has a homomorphism \cite[Example 16.1.2(c)]{MR1644323}
\begin{equation}\label{chow-end}
     \CH^{\mathrm{dim}~Y}(Y \times Y) \to \End(\CH^*(Y))
\end{equation}
of rings; the ring structure on the former is given by the composition of correspondences. The categorification of (\ref{chow-end}) is of great interest.  We use Theorem \ref{Main} to provide a categorification (Theorem \ref{nuovo}) of (\ref{chow-end}) when $\dim Y =1$. It should be remarked that the problem of categorification  of (\ref{chow-end}) seems to be formidable when $\dim Y > 1$: for a surface $Y$, one needs to endow the Picard $2$-category $\stGerb_{Y\times Y}(\cK_2)$ with a ring structure.
 
Let $\CHcat^1(Y)$ denote the Picard category of line bundles on $Y$. If $C$ is a smooth projective curve over an algebraically closed field, then  $\CHcat^1(C \times C)$ can be naturally enhanced to a ring category and the natural functor $\CHcat^1(C \times C) \to \stEnd(\CHcat^1(C))$ is a functor of ring categories (Theorem \ref{nuovo}).

While there are several generalizations \cite{MR2562455, MR991974, MR962493, MR1005159, MR1085257, MR1078860, MR772054, Eriksson} of Deligne's construction, they are all restricted to line bundles or codimension one. However, Theorem \ref{Main} suggests new generalizations \cite{ER2} of Deligne's construction: if $f:Y\to S$ is smooth proper of relative dimension two, there exists a natural bi-additive functor
\[
 \Psi^2_{Y/S}: \gerb_Y(\cK_2) \times \gerb_Y(\cK_2) \lto \gerb_S(\cK_2), 
\]
which is a categorification of the pairing 
\[
  \CH^2(Y) \times \CH^2(Y) \lto \CH^4(Y) \xrightarrow{f_*} \CH^2(S)\,.
\]

\subsection*{Organization}
In section~\ref{Leray} we analyze in some detail the low-degree terms exact sequence of the Leray spectral sequence for $\pi\colon X \to S$. While this is all well known from \cite[\S V.3.1-2]{Giraud}, we expand on it as several details were famously left as an exercise (\cite[Exercice 3.1.9.2]{Giraud}). Since we describe the maps in the sequence fairly explicitly, as an example we use them to illustrate an application to the Brauer group, which is of independent interest. In section~\ref{cat-int-div} we prove Theorems~\ref{prop1} and~\ref{prop2}, and, finally, we prove Theorem \ref{prop3}, the comparison with Deligne's construction,  in section~\ref{comp-deligne}. We end with a proof of the  main result (Theorem \ref{nuovo}) about the categorification of correspondences in \S \ref{sec:categ-corr}. The requisite results from the theory of Picard categories are in \S \ref{sec:picard-stacks-endom}.

\subsection*{Notations}
For any sheaf $A$ of abelian groups on a site we denote by $\tors (A)$ the Picard category of $A$-torsors and by $\stTors (A)$ the corresponding stack. Similarly, one categorical level up, for $\gerb (A)$ and $\stGerb (A)$, which denote the 2-Picard category of $A$-gerbes and the corresponding 2-stack. For any stack $\stF$, we denote by $\pi_0(\stF)$ its sheaf of connected components and by $\mathsf{F}$ the category of its global sections, that is $\HOM(\pt,\stF)$, where $\pt$ is the terminal sheaf.

\subsection*{Acknowledgements.} Thanks to Gerard Freixas I Montplet for alerting us to \cite{MR962493}, and Sasha Beilinson for his help with understanding~\cite{MR962493} and for helping us with the proof of Proposition~\ref{beilinson-lemma}.

This work was partly supported by a Travel Award Grant from the Florida State University College of Arts and Sciences, which we gratefully acknowledge.

\section{Norms and Deligne's construction} We recall some properties of Deligne's functor $\Psi_{X/S}$ and the line bundle $\<{L,M}\>$ on $S$.
\subsection{Norms and finite maps}
Let $g: {V}\to W$ be a finite and flat morphism of varieties. Given a line bundle $L$ on $V$, its norm (relative to $g$) is a line bundle $N_{V/W}(L)$ on $W$. One has an additive functor of Picard categories \cite[\S 7.1]{MR902592} 
\[
    N_{V/W}: \tors_V{\mathbb G_m} \lto \tors_W{\mathbb G_m}\,.
\]
\subsection{Characterization of Deligne's functor $\Psi_{X/S}$}\label{deligne-as-det}
Let $D\subset X$ be an effective relative Cartier divisor \cite[Tag 056P]{stacks-project} of $\pi:X \lto S$. Namely, $D$ is an effective Cartier divisor on $X$ and the induced morphism $\pi:D\to S$ is finite and flat. For any line bundle $M$ on $X$, the norm $N_{D/S}(M)$ is a line bundle on $S$.  Deligne's construction $\Psi_{X/S}$ is characterized by \cite[XVIII 1.3.16]{SGA4}: (i) functoriality, and (ii) for any section of $L$ with zero set an effective Cartier divisor $D$ on $X$, a canonical isomorphism  
\begin{equation}\label{norm-finite}
  N_{D/S} \big(M~\big|_D~\big) \iso \<{L,M} \>\,.
\end{equation}
Another approach to $\Psi_{X/S}$ from \cite[XVIII 1.3.17.2]{SGA4} is the following: if $D$ and $E$ are effective relative Cartier divisors on $X$, then 
\begin{equation}
    \<{\cO (D), \cO (E)}\> \iso \det \RR\pi_*(\cO (D) \overset{\LL}{\otimes} \cO (E)).
\end{equation}

\section{Fiber Integration of gerbes and the Leray spectral sequence}
\label{Leray}

Let $A$ be an abelian sheaf on $X$. The spectral sequence 
\begin{equation}
  \label{leray}
  E^{i,j}_2 = \H^i (S, \R^j\pi_*A) \Rightarrow \H^{i+j}(X, A)
\end{equation}
has as low-term exact sequence \cite[Appendix II, page 309]{MilneEC} 
\begin{equation}\label{lowterm}
  0 \lto E^{1,0}_2 \lto  E^1 \lto E^{0,1}_2 \lto E^{2,0}_2
  \lto E^2_1 \lto E^{1,1}_2\,,
  \end{equation}
where
\[
  E^1 = \H^1(X,A)\,, \quad
  E^2_1 = \Ker( \H^2(X, A) \lto \H^0(S, \R^2\pi_*A))\,,
\]
and, of course, $\H^0(S, \R^2\pi_*A) = E^{0,2}_2$.

The maps above arise from functors between categories of torsors and gerbes, as shown in~\cite[pp.~324--327]{Giraud}. For our own purposes, and also to rephrase the arguments in loc.~cit.\ in a more transparent way, we turn to an explicit description of these functors.

Our arguments below (in the Zariski topology) are easily seen to be also valid in the \'etale or analytic topology. In fact, at the beginning they are valid for any morphism between sites whose underlying functor is assumed for simplicity to preserve finite limits, and we shall begin our discussion in such generality.

\subsection{Site morphisms, push-forwards and pullbacks of stacks }
\label{sec:push-forw-pullb}

Let $\pi\colon \D\to \C$ be a morphism of (small) sites. We let $u=\pi^{-1}\colon \C \to \D$ denote the underlying functor. Thus, $\pi$ is a morphism of sites if composition along $u$ preserves sheaves, and this operation has a left adjoint that is exact \cite{SGA4}; this is implied by the property that $u$ preserves coverings and if both $\C$ and $\D$ have finite limits $u$ preserves them \cite{JardineLHT}, \cite[\href{https://stacks.math.columbia.edu/tag/00X0}{Tag 00X0}]{stacks-project}.

Let $\stG$ be a category over $\D$. Its \emph{push-forward} $\pi_*\stG$ along $\pi$ is defined by
\begin{equation*}
  \pi_*\stG = \C \times_{\D} \stG
\end{equation*}
as a category over $\C$ via the first projection. It is fibered (resp. a stack) if so is $\stG$. On the other hand, let $p\colon \stF\to \C$ be a stack. The inverse image $\pi^*\stF$ is a pair $(\stF',\phi)$, where $\stF'$ is a stack over $\C$, and $\phi\colon \stF\to \pi_*\stF'$ a stack morphism such that, for any stack $\stG$ over $\D$, the following composite functor
\begin{equation*}
  \HOM_\D(\stF',\stG) \lto \HOM_\C(\pi_*\stF',\pi_*\stG) \lto \HOM_\C(\stF,\pi_*\stG)\,,
\end{equation*}
is an equivalence of categories \cite[Déf.\ 3.2.1]{Giraud}. Here $\HOM$ denotes the category of stack morphisms. Thus, the inverse image is truly only defined up to equivalence.

While specific formulas to compute a model of $\pi^*\stF$ do exist \cite[\href{https://stacks.math.columbia.edu/tag/04WJ}{Tag 04WJ}]{stacks-project}, the universal property is sufficient to characterize its connected components.
Recall that $\pi_0(\stF)$ is the sheaf corresponding to the presheaf of connected components: to any object $U\in \C$ it assigns the set of connected components $\pi_0(\stF_U)$ of the fiber category $\stF_U$ \cite[Chap.\ 7]{BreenAst}. (In ref.\ \cite[n.\  2.1.3.3]{Giraud} this is the ``sheaf of maximal sub-gerbes of $\stF$.'') We have \cite[Prop.\ 2.1.5.5 (iii)]{Giraud} an isomorphism of sheaves over $\D$:
\begin{equation*}
  \pi_0(\pi^*\stF) \isoto \pi^*(\pi_0(\stF)) \,.
\end{equation*}
This follows from the fact that if $x,y$ are any two objects of $\stF$ over $U\in \C$, then there is a sheaf isomorphism
\begin{equation*}
  \pi^*\Hom_{\stF}(x,y) \isoto \Hom_{\stF'}(x',y')\,,
\end{equation*}
where the objects $x',y'$ of $\stF'_{\pi^{-1}(U)}$ are constructed via the above universal property (ibid.). As a consequence, since a gerbe is locally connected, we have that the inverse image of a gerbe is a gerbe \cite[Cor.\ 2.1.5.6]{Giraud}. In fact, assuming, as we shall do in later sections, that $\stF$ has band $A$, for an abelian sheaf $A$ over $\C$, then $\pi^*\stF$ has band $\pi^*A$.

On the other hand, even if $\stG\to \D$ is a gerbe, its push-forward will not necessarily be so. In other words, $\pi_0(\pi_*(\stG))$ may turn out to be a nontrivial sheaf over $\C$. More precisely, we have the following statement.
\begin{lemma}[\protect{\cite[Exercice 3.1.9.2]{Giraud}}]
  \label{lem:exercise}
  Let $\pi\colon \D\to \C$ be a site morphism as above. Let $\stG$ be an $A$-gerbe on $\D$, where $A$ is an abelian sheaf. Then $\pi_0(\pi_*(\stG))$ is a pseudo $\R^1\pi_*A$-torsor. It is a torsor if and only if the class $[\stG]\in \H^2(\D,A)$ lies in the kernel of the map $\H^2(\D,A)\to \H^0(\C,\R^2\pi_*A)$, and hence in the term denoted $E^2_1$ above.
\end{lemma}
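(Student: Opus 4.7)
\emph{Proof plan.} I would present $\pi_0(\pi_*\stG)$ as the sheafification on $\C$ of the presheaf $U\mapsto \pi_0(\stG_{u(U)})$ and then carry the standard twisting action of $A$-torsors on an $A$-gerbe through this sheafification. Since $\pi_*\stG=\C\times_\D\stG$, an object of the fiber $(\pi_*\stG)_U$ is just an object of $\stG$ over $u(U)$, so $\pi_0(\pi_*\stG)$ is tautologically the sheafification of the indicated presheaf.

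Next, I would build the pseudo-torsor structure. Because $\stG|_{u(U)}$ is an $A$-gerbe, any $A$-torsor $T$ on $u(U)$ and object $x\in \stG_{u(U)}$ produce a twisted object $x\wedge^A T$, well-defined up to canonical isomorphism and functorial in $U$. On isomorphism classes this yields a natural action of $\H^1(u(U),A)$ on $\pi_0(\stG_{u(U)})$, which is free and transitive whenever some $x\in\stG_{u(U)}$ exists (then $T\mapsto x\wedge^A T$ is a bijection onto $\pi_0(\stG_{u(U)})$). Sheafifying produces an action of $\R^1\pi_*A$ on $\pi_0(\pi_*\stG)$ whose shear map is an isomorphism, which is precisely the pseudo-torsor condition.

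Finally, I would identify the obstruction to being a genuine torsor. By definition, $\pi_0(\pi_*\stG)$ is a torsor precisely when every $U\in\C$ admits a covering $\{U_i\to U\}$ with $\stG_{u(U_i)}\neq \emptyset$, equivalently with $[\stG|_{u(U_i)}]=0$ in $\H^2(u(U_i),A)$. The presheaf $U\mapsto \H^2(u(U),A)$ sheafifies to $\R^2\pi_*A$, and, by construction of the Leray spectral sequence, the edge map $\H^2(\D,A)\to \H^0(\C,\R^2\pi_*A)$ sends $[\stG]$ to the global section $U\mapsto [\stG|_{u(U)}]$ of this presheaf. Local triviality on $\C$ is therefore equivalent to the vanishing of that section, giving the stated criterion and identifying the admissible classes with the term denoted $E^2_1$.

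The three steps are each essentially formal once the twisting construction is in hand; the delicate point---Giraud's deferred calculation---will be the bookkeeping around sheafification in the second step: one must verify that the shear map at the presheaf level descends to a morphism of sheaves on $\C$, and that the action is canonical, independent of any local choice of base object. Both points ultimately reduce to the exactness of $\pi^{-1}$ on sheaves and the local neutrality of gerbes.
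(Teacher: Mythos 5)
Your proposal is correct and follows essentially the same route as the paper: the twisting action of $A$-torsors on objects of the gerbe yields the pseudo-torsor structure (freeness and transitivity being exactly the content of Lemma~\ref{lem:tors}), and the torsor condition is identified with local neutrality of $\stG$ over the $u(U_i)$, i.e.\ with the vanishing of the image of $[\stG]$ under the edge map to $\H^0(\C,\R^2\pi_*A)$. The ``delicate bookkeeping'' you defer—canonicity and functoriality of the twist through sheafification—is precisely what the paper's Lemmas~\ref{lem:twist} and~\ref{lem:tors} carry out explicitly via descent data along hypercovers.
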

An $A$-gerbe $\stG$ is {\it{horizontal}} if its class $[\stG]$ lies in $E^2_1$. If $\stG$ is horizontal, then $\pi_0(\pi_*(\stG))$ is an $\R^1\pi_*A$-torsor. 

By $\H^i(\C,-)$ (same for $\D$) we denote the cohomology of the terminal sheaf $\pt$. In the concrete case of the Zariski sites, where $\pt$ is represented by the site's terminal object, this reduces to the groups considered at the beginning of this section.

\begin{definition}\label{horizontal}
Let $\stGerb_X(\cK_2)'$ be the full sub(2-)category of $\stGerb_X(\cK_2)$ consisting of horizontal gerbes. The functor 
\[
    \Theta_{\pi}:\gerb_X(\cK_2)'\to \tors_S(\R^1\pi_*\cK_2)
\]
sends a gerbe $\stG$ to $\pi_0(\pi_*(\stG))$. 
\end{definition}
We write $\theta:E^2_1\to E^{1,1}_2$ for the induced map. (In the previous statement, as well as in several that follows, the relevant (2-)categories can be upgraded to the correspoonding (2-)stacks.)

\subsection{Proof of Lemma~\protect~\ref{lem:exercise}}
\label{sec:médaille_de_chocolat}
This section is devoted to a complete proof of Lemma~\ref{lem:exercise}. Several points of the proof will be explicitly needed in sections~\ref{cat-int-div} and~\ref{comp-deligne} below.\footnote{As all the details are famously not available in the original reference as well as in the literature, we felt compelled to include them here.}

It is convenient to express the sites' topologies in terms of local epimorphisms \cite{SGA4,KS}, and take hypercovers of those, in particular Čech nerves. For simplicial objects we use the ``opposite index convention'' \cite{Duskin2001} (and reverse the order of the maps for cosimplicial ones) when pulling back by simplicial maps: $d^*_i(-) = (-)_{[n]\setminus i}$, where $[n]$ is the ordinal $[n]=\{0 < 1 < \dots < n\}$. 

\subsubsection{Objects with operators}
\label{sec:objects-with-oper}

Let $\stF$ be a stack over a site $\C$, and let $G$ be a sheaf of groups over $\C$. The stack $\stOp(G,\stF)$ has objects the pairs $(x,\eta)$, where $x\in \stF_U$, and $\eta\colon G\vert_U\to \Aut_U(x)$. Morphisms from $(x,\eta)$ to $(y,\theta)$ are arrows $\alpha\colon x\to y$ in $\stF_U$ compatible with the structure: $\alpha \circ \eta (g) = \theta(g)\circ\alpha$, for all sections $g\in G\vert_U$.
\begin{lemma}[\protect{\cite[III № 2.3]{Giraud}}]
  \label{lem:twist}
    There is a stack morphism $t\colon \stTors(G)\times_\C \stOp(G,\stF)\to \stF$.
\end{lemma}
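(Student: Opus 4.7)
The plan is to construct $t$ locally by descent, trivializing the torsor factor and using the stack property of $\stF$ to glue. Given an object $(P,(x,\eta))$ of $\stTors(G)\times_\C \stOp(G,\stF)$ over some $U\in\C$, I would choose a local epimorphism $V\to U$ trivializing $P$, pick a section $p\in P(V)$, and set provisionally $t(P,(x,\eta))_V := x|_V$. Over $V\times_U V$, with projections $d_0,d_1$, the torsor axiom yields a unique $g\in G(V\times_U V)$ with $d_0^*p = d_1^*p\cdot g$, and I take as descent isomorphism $\eta(g)\colon d_1^*x \isoto d_0^*x$ viewed inside $\Aut(x)$ after transport.

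Next I would verify the cocycle condition on $V\times_U V\times_U V$: the three relevant sections $g_{01},g_{12},g_{02}$ of $G$ satisfy $g_{02}=g_{01}\cdot g_{12}$ from the torsor structure of $P$, and since $\eta$ is a homomorphism the corresponding automorphisms obey $\eta(g_{02})=\eta(g_{01})\circ\eta(g_{12})$; hence the descent datum is genuine. Because $\stF$ is a stack, this datum determines an object $t(P,(x,\eta))\in\stF_U$, unique up to unique isomorphism.

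For morphisms, consider an arrow $(\phi,\alpha)\colon (P,(x,\eta))\to(P',(y,\theta))$ in the product stack, where $\phi$ is $G$-equivariant and $\alpha$ satisfies $\alpha\circ\eta(g)=\theta(g)\circ\alpha$. After passing to a common refinement trivializing both torsors by sections $p,p'$, the map $\phi$ is determined by the unique $h\in G(V)$ with $\phi(p)=p'\cdot h$. The compatibility relation for $\alpha$ combined with $\eta,\theta$ being homomorphisms ensures that $\theta(h)\circ\alpha|_V$ (equivalently $\alpha|_V\circ\eta(h)$) intertwines the two descent isomorphisms, so it descends to a morphism between $t(P,(x,\eta))$ and $t(P',(y,\theta))$ over $U$. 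Replacing $p$ by $p\cdot k$ for $k\in G(V)$ conjugates the descent cocycle by $\eta(k)$, and $\eta(k)$ itself supplies the needed canonical isomorphism between the two resulting descended objects, so the construction is independent of the trivialization up to canonical isomorphism.

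The main obstacle is the coherence bookkeeping: one must verify that the pieces above are natural in $U$ (compatible with further pullback along any refinement $V'\to V\to U$), and that the canonical isomorphisms comparing different choices of $p$ satisfy the pseudofunctoriality axioms. Equivalently, the assignment built on each local trivialization must fit together into a morphism of stacks rather than merely a collection of functors on fibers. Once the descent cocycle $\eta(g)$ is recognized as the genuine image under $\eta$ of the torsor cocycle of $P$, both naturality and coherence reduce to the observation that $\eta$ is a morphism of sheaves of groups; the remaining verifications are formal.
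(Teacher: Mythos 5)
Your construction is correct and follows essentially the same route as the paper: both express $P$ and $x$ by descent data over a cover, take $\eta$ of the torsor cocycle (composed with the transport/gluing isomorphism of $x$, the paper's $\alpha$) as the descent datum for the twisted object, verify the cocycle identity from $\eta$ being a homomorphism of sheaves of groups, and treat morphisms by the same local formula $\alpha\circ\eta(h)=\theta(h)\circ\alpha$. The only cosmetic difference is that you track the choice of trivializing section $p$ explicitly and prove independence of that choice, whereas the paper phrases everything directly in terms of the cocycle $g\in G(V_1)$ of the torsor relative to the cover.
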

This ``twisting'' morphism assigns to each pair $(P,(x,\eta))$ over $U\in\C$ an object of $\stF_U$, variously denoted as $\prescript{P}{}x$ or $P\wedge^G x$.
\begin{proof}
If $P=G$, the trivial $G$-torsor, we set $\prescript{G}{}x = x$. To a morphism $(g,\alpha): (G,(x,\eta)) \to (G,(x',\eta'))$ (here $g\in G$ is identified with an automorphism of the trivial torsor) we assign the morphism $\prescript{G}{}x \to \prescript{G}{}x'$ given by $\alpha\circ \eta(g) = \eta'(g)\circ \alpha$. 
In general, we regard $P\in \stTors(G)\vert_U$ and $x\in \stF_U$ as defined by descent data relative to  an acyclic fibration $\epsilon \colon V_\bullet\to U$ covering $U$. The pullbacks $\epsilon^*x$ and $\epsilon^*P \iso G\vert_{V_0} $ to $V_0$ are glued over $V_1$ by isomorphisms
\begin{equation*}
  \alpha \colon x_1 \to x_0\,, \qquad g \colon G \to G\,,
\end{equation*}
where $g\in G(V_1)$ is an isomorphism between trivial $G\vert_{V_1}$-torsors, satisfying the cocycle conditions $\alpha_{02} = \alpha_{01}\circ \alpha_{12}$ and $g_{01}g_{12} = g_{02}$ over $V_2$. That $(x,\eta)$ is an object of $\stOp(G,\stF)$ is expressed by the condition $\alpha \circ \eta_1(-) = \eta_0(-) \circ \alpha$ over $V_1$.

Pullbacks to $V_2$ along the face maps $d_0,d_1,d_2$ yield  morphisms $(g_{ij},\alpha_{ij}): (G,(x_j,\eta_j)) \to (G,(x_i,\eta_i))$, $0\leq i < j \leq 2$, in $\stOp(G,\stF)_{V_2}$ such that $\alpha_{ij}\circ \eta_j(g_{ij}) = \eta_i(g_{ij})\circ \alpha_{ij}$, in addition to the other cocycle conditions. Then we have
\begin{equation*}
  \alpha_{02} \circ \eta_2(g_{02}) =
  \alpha_{01}\circ \alpha_{12} \circ \eta_2(g_{01})\circ \eta_2(g_{12})
  = (\alpha_{01}\circ \eta_1(g_{01})) \circ (\alpha_{12}\circ \eta_2(g_{12}))\,,
\end{equation*}
showing the gluing data $\alpha \circ \eta_1(g) = \eta_0(g) \circ \alpha \colon x_1\to x_0$ satisfy the cocycle identity and therefore define an object $\prescript{P}{}x$ of $\stF_U$.
\end{proof}
The most important properties of the twisted objects are listed in the following lemma implicit in \cite{Giraud}, where for any two objects $x,y\in \stF_U$ we denote by $\Isom_U(x,y)$ the sheaf of isomorphisms from $x$ to $y$. Note that $\Isom_U(x,y)$ is a right $\Aut_U(x)$-torsor. (In fact it is an $(\Aut_U(y),\Aut_U(x))$-bitorsor, but we shall not need this fact.)
\begin{lemma}[\cite{Emsalem2017,MR2362847}]
  \label{lem:tors}
  Let $\stF$ be a stack and $G$ a sheaf of groups on $\C$. Let $\stOp(G,\stF)$ the stack of objects with $G$-action. The twisting morphism $t$ has the following properties:
  \begin{enumerate}
  \item\label{item:1} If $P\in \stTors(G)_U$, and $x\in \stF_U$, then
    \begin{math}
      \Isom_U(x, P\wedge^G x) \iso P\wedge^G \Aut_U(x)\,;
    \end{math}
  \item\label{item:2} If $P=\Isom_U(y,x)$, then there is a canonical isomorphism $P\wedge^{\Aut_U(y)} y\isoto x$, where the twisting arises from the stack $\stOp(\Aut_U(y),\stF\vert_U)$ over $\C/U$. 
  \end{enumerate}
\end{lemma}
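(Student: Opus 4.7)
My plan is to prove both parts by constructing a natural candidate map and verifying it is an isomorphism by reduction to a local trivialization of $P$. The explicit descent description of $P\wedge^G x$ from the proof of Lemma~\ref{lem:twist} supplies all the bookkeeping that is needed: over a hypercover $\epsilon\colon V_\bullet\to U$ on which $P$ is given by a cocycle $g\in G(V_1)$, the twist is obtained by regluing local pullbacks of $x$ via $\alpha\circ\eta_1(g)=\eta_0(g)\circ\alpha$.

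For part~\ref{item:1}, I would first fix the $G$-action on $\Aut_U(x)$ used to form $P\wedge^G\Aut_U(x)$: namely $g\cdot a = \eta(g)\circ a$, viewing $\Aut_U(x)$ as a right torsor over itself. I would then introduce the evaluation-type map
\[
  \mu\colon P\wedge^G\Aut_U(x) \lto \Isom_U\bigl(x,\,P\wedge^G x\bigr), \qquad [p,a]\longmapsto \bigl(y\mapsto [p,a(y)]\bigr),
\]
and verify that the defining relation $[pg,a]=[p,\eta(g)\circ a]$ on the left matches the contracted-product relation $[pg,y']=[p,\eta(g)(y')]$ inside $P\wedge^G x$, so that $\mu$ is well defined and right $\Aut_U(x)$-equivariant. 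Both sides are right $\Aut_U(x)$-torsors, so to conclude $\mu$ is an isomorphism it suffices to check it locally: on a cover where $P$ is trivial, both sides restrict to $\Aut$ of the pulled-back $x$ and $\mu$ reduces to the identity.

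For part~\ref{item:2}, the tautological action of $\Aut_U(y)$ on $y$ by evaluation places $(y,\id)$ in $\stOp(\Aut_U(y),\stF\vert_U)$, and $P=\Isom_U(y,x)$ is a right $\Aut_U(y)$-torsor via precomposition. I would consider the evaluation morphism
\[
  \nu\colon \Isom_U(y,x)\wedge^{\Aut_U(y)} y \lto x, \qquad [\phi,y']\longmapsto \phi(y'),
\]
which is well defined because both $[\phi\circ a,y']$ and $[\phi,a(y')]$ evaluate to $\phi(a(y'))$. Any local section $\phi_0$ of $P$ trivializes the torsor and identifies $\nu$ with the isomorphism $\phi_0\colon y\to x$ itself, so $\nu$ is a local and hence global isomorphism.

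The main technical obstacle for both parts is keeping the left/right conventions consistent---the side on which $G$ acts on $P$, the side on which $\Aut_U(x)$ acts on itself as a torsor, and the direction of the contracted-product relation---so that the identifications line up with the regluing formulas used in Lemma~\ref{lem:twist}. Once those conventions are fixed, the verifications are essentially formal, and the constructions manifestly descend because they are defined in terms of sheaves of isomorphisms and evaluations.
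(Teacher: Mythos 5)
Your proof is correct in substance and rests on the same local computation as the paper's, but it is packaged in the opposite direction, and one point needs care. The paper computes the sheaf $\Isom_U(x,P\wedge^G x)$ directly: it unwinds what a morphism of twisted objects is in terms of descent data over a hypercover $V_\bullet\to U$ and observes that the resulting relation $\eta_1(g)\circ\lambda_1=\alpha^{-1}\circ\lambda_0\circ\alpha$ is exactly the descent condition for a section of $P\wedge^G\Aut_U(x)$; similarly, for~(\ref{item:2}) it records the gluing defect $\bar h\in\Aut(y_1)$ of a local section $\lambda$ of $P$ and recognizes the pair $(\lambda,\bar h)$ as a descent datum for an isomorphism $P\wedge^{\Aut_U(y)}y\to x$. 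You instead construct explicit maps $\mu$ and $\nu$ going the other way and conclude by equivariance together with local triviality; since an equivariant map of torsors under the same group is automatically an isomorphism, this is a legitimate and arguably cleaner route. The one caveat is that formulas such as $[p,a]\mapsto\bigl(y\mapsto[p,a(y)]\bigr)$ and $[\phi,y']\mapsto\phi(y')$ treat $x$ and $y$ as sheaves of sets with elements, which they need not be: $\stF$ is an arbitrary stack, and $P\wedge^G x$ is defined only by regluing local copies of $x$, not as a set-theoretic contracted product. Your maps must therefore be defined the way your opening paragraph suggests: locally, where a section $p$ of $P$ identifies $P\wedge^G x$ with $x$, send $[p,a]$ to $a$ composed with that identification (and $[\phi,y']$ to the local isomorphism $\phi$ itself), and then check that these local isomorphisms satisfy the gluing condition from Lemma~\ref{lem:twist}. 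With that reading your argument is complete and matches the paper's computation step for step.
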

\begin{proof}
  Let $(y,\theta)$ be another object of $\stOp(G,\stF)$ over $U$. Using the same notation as in Lemma~\ref{lem:twist} for descent data relative to $V_\bullet \to U$, a morphism $Q\wedge^G y \to P\wedge^G x$ of twisted objects over $U$ corresponds to a morphism $\lambda \colon \epsilon^*y \to \epsilon^*x$ over $V_0$ such that the diagram over $V_1$
  \begin{equation}
    \label{eq:1}
    \begin{tikzcd}
      y_1 \ar[r,"\lambda_1"] \ar[d,"\beta \circ\theta_1 (h)"'] &
      x_1 \ar[d,"\alpha\circ\eta_1(g)"] \\
      y_0 \ar[r,"\lambda_0"']        & x_0
    \end{tikzcd}
  \end{equation}
  commutes. Here $\beta$ and $h$ represent the descent data and cocycle for $y$ and the $G$-torsor $Q$, respectively. 

  In particular, if $y=x$ and $Q$ is the trivial torsor, we get the simpler relation
  \begin{equation*}
    \alpha \circ \eta_1(g) \circ \lambda_1 = \lambda_0\circ \alpha\,.
  \end{equation*}
  Rewriting it in the more suggestive way
  \begin{equation*}
    \eta_1(g) \circ \lambda_1 = \alpha^{-1}\circ \lambda_0\circ \alpha
  \end{equation*}
  shows that the $\lambda$ defines a section of $P\wedge^G \Aut_U(x)$, proving the first point.

  If $P=\Isom_U(x,y)$, $\lambda \colon \epsilon^*y \to \epsilon^*x$ provides a section of $P$ over $V_0$. As $\lambda$ does not necessarily descend to $U$, the two pullbacks $\lambda_0$ and $\lambda_1$ to $V_1$ are related by a diagram of the form
  \begin{equation*}
    \begin{tikzcd}
      y_1 \ar[r,"\lambda_1"] \ar[d,"\bar h"'] & x_1 \ar[dd,"\alpha"] \\
      y_1 \ar[d,"\beta"'] \\
      y_0 \ar[r,"\lambda_0"'] & x_0
    \end{tikzcd}
  \end{equation*}
  for an appropriate $\bar h\in \Aut(y_1) = \Aut_U(y)(V_1)$. Comparing with~\eqref{eq:1} (taking $\theta=\id$) shows these data descend to an isomorphism $P\wedge^{\Aut_U(y)} y\isoto x$, as wanted.
\end{proof}

$\stF$ is an abelian gerbe with band $A$ if and only if there is a \emph{canonical morphism} $\stF \to \stOp(A;\stF)$, because, in such case, the correspondence
\begin{equation*}
  x\in \stF_U \rightsquigarrow
  \eta_x\colon A\vert_U\isoto \Aut_U(x)
\end{equation*}
is functorial: the diagram
\begin{equation*}
  \begin{tikzcd}
    & A\vert_U \ar[dl,"\eta_x"'] \ar[dr,"\eta_y"] \\
    \Aut_U(x) \ar[rr,"\alpha_*"'] && \Aut_U(y)
  \end{tikzcd}
\end{equation*}
commutes whenever $\alpha\colon x\to y$ \cite[Def.\ 2.9]{BreenAst}. (Note that the above diagram embodies a morphism of $\stOp(A;\stF)$.) Therefore there is a \emph{canonical twisting action} $(P,x) \rightsquigarrow {}^Px$ resulting from the composite morphism
\begin{equation*}
  \stTors(A) \times_\C \stF \lto \stTors(A)\times_\C \stOp(A; \stF) \lto \stF\,.
\end{equation*}

\subsubsection{The pushforward}
\label{sec:pushforward}

Let us return to the situation of the site morphism $\pi\colon \D\to \C$. Recall that $u\colon \C\to \D$ is the underlying functor of $\pi$.

Let $A$ be an abelian sheaf and  $\stG$ be an $A$-gerbe over $\D$. It is convenient to identify the band with the automorphism sheaves. In this way, Lemma~\ref{lem:tors}, statement~(\ref{item:1}), simply becomes $\Isom_U(x,P\wedge^A x)\iso P$.

The action $\pi_0(\pi_*\stG) \times \R^1\pi_*A \to \pi_0(\pi_*\stG)$ is induced by the twisting action of $\stTors(A)$ on $\stG$ on $\D$: if $x\in \stG_{u(U)}$ represents a section of $\pi_0(\pi_*\stG)$, and $P\in \stTors(A)_{u(U)}$ represents a class of $\R^1\pi_*A\,(U)$, we let the result of the action be the connected component of the object $P\wedge^A x\in \stG_{u(U)} \iso \pi_*(\stG)_U$.  This action is free, because if $P\wedge^Ax \iso x$, then by Lemma~\ref{lem:tors}~(\ref{item:1}) $\Hom_U(x,x\wedge^A x) \iso P$ has a global section, hence $P\iso A\vert_{u(U)}$.

The action is also transitive. Indeed, if the objects $x,y\in \stG_{u(U)} \iso \pi_*(\stG)_U$ represent two sections of $\pi_0(\pi_*\stG)$, by Lemma~\ref{lem:tors}~(\ref{item:2}) we have $y \iso P \wedge^A x$, where $P=\Hom_{u(U)}(x,y)$. Therefore the section of $\pi_0(\pi_*\stG)$ over $U$ defined by $y$ is obtained from that defined by $x$ via the action of the section of $\R^1\pi_*A\, (U)$ determined by $P$, as wanted. Thus, $\pi_0(\pi_*\stG)$ is a pseudo-torsor.

Let $U$ be an object of $\C$, and denote by $U'=u(U)$ the corresponding object of $\D$. As a gerbe, $\stG$ is locally nonempty, hence there will be a local epimorphism $V'\to U'$ covering $U'$ with an object $x\in \stG_{V'}$. The object $x$ ought to be seen as a trivialization of the restriction $\stG\vert_{U'}$, whose characteristic class is an element of $\H^2(U',A\vert_{U'}) \iso \R^2\pi_*A \,(U)$. (In effect, this class can be calculated by computing the $2$-cocycle determined by $x$ via a hypercovering $V'_\bullet\to U'$ in the usual way \cite{BreenAst}.) If it is zero, then $\stG_{U'}$ has a global object (that is, $x$ descends to an object over $U'$), which then provides a section of $\pi_0(\pi_*\stG)$ over $U\in \C$. Clearly, if $\R^2\pi_*A$ vanishes, this argument shows that $\pi_0(\pi_*\stG)$ is locally nonempty. On the other hand, if $\pi_0(\pi_*\stG)$ is locally nonempty, for every object $U$ we can find a local epimorphism $V\to U$ such that $\stG_{u(V)}$ has an object and therefore $\H^2(u(V),A\vert_{u(V)}) = 0$. Now writing $\R^2\pi_*A (U) = \lim_{[V_\bullet \to U]} \H^2(u(-),A\vert_{u(-)})$ we get $\R^2\pi_*A =0$. This finishes the proof of Lemma~\ref{lem:exercise}.

\subsection{Maximal subgerbes and pullbacks}
\label{sec:maxim-subg-pullb}

The following extra facts (``tautologies'' in \cite[V~№ 3.1.8]{Giraud}) are going to be helpful. Recall that we have a site morphism $\pi\colon \D\to \C$. Following Giraud, let us say that a gerbe $\stG$ over $\D$ \emph{comes from} a gerbe on $\C$, if there is a gerbe $\stF$ on $\C$ and a morphism of gerbes $m\colon \pi^*\stF\to \stG$ over $\D$.
\begin{lemma}
  \label{lem:tautology}
  The gerbe $\stG$ on $\D$ comes from a gerbe on $\C$ if and only if the sheaf $\pi_0(\pi_*\stG)$ admits a section.
\end{lemma}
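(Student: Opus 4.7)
The plan is to prove both directions using the adjunction between $\pi^*$ and $\pi_*$ recalled in \S\ref{sec:push-forw-pullb}, together with the observation that the sheaf of connected components of any gerbe is the terminal sheaf $\pt$.

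For the necessity direction ($\Rightarrow$), suppose $m\colon \pi^*\stF \to \stG$ is a morphism of gerbes. The universal property of $\pi^*$ converts $m$ into a stack morphism $\tilde m\colon \stF \to \pi_*\stG$ on $\C$. Applying the functor $\pi_0$ yields a map of sheaves $\pi_0(\stF) \to \pi_0(\pi_*\stG)$. Since a gerbe is locally nonempty and locally connected, $\pi_0(\stF) = \pt$, and the resulting morphism $\pt \to \pi_0(\pi_*\stG)$ is the desired global section.

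For the sufficiency direction ($\Leftarrow$), a global section $s$ of $\pi_0(\pi_*\stG)$ will produce $\stF$ as a \emph{maximal sub-gerbe} of $\pi_*\stG$ in the sense of \cite[n.\ 2.1.3.3]{Giraud}. Explicitly, I would define $\stF \subset \pi_*\stG$ as the full substack whose objects over $U \in \C$ are those $x \in \stG_{u(U)}$ whose class in $\pi_0(\pi_*\stG)(U)$ coincides with $s\vert_U$. The defining condition is stable under restriction and is local, since $\pi_0$ is a sheaf, so $\stF$ is indeed a substack. It is locally nonempty because $s$, being a section of a sheafification, is locally represented by honest objects. It is locally connected because two objects of $\stF_U$ share the same $\pi_0$-class in $\stG_{u(U)}$, hence are locally isomorphic in $\stG$, and any such isomorphism remains in $\stF$ by fullness. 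Thus $\stF$ is a gerbe on $\C$, and the inclusion $\stF \hookrightarrow \pi_*\stG$ is adjoint to the required morphism $\pi^*\stF \to \stG$.

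The single delicate point is the sufficiency direction, and specifically the verification that local connectedness transfers from $\stG$ to the substack $\stF$. This rests on the fact established in \S\ref{sec:pushforward} that the $\R^1\pi_*A$-action on $\pi_0(\pi_*\stG)$ is \emph{free}: two local objects of $\stG$ sharing the same $\pi_0$-class are genuinely locally isomorphic, not merely related by a nontrivial twist. Without this freeness one could not guarantee that the locally connecting isomorphisms actually exist inside $\stG_{u(U)}$, let alone land in $\stF$.
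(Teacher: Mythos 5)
Your proof is correct and follows essentially the same route as the paper: the forward direction is the same adjunction-plus-$\pi_0(\stF)=\pt$ argument, and your substack of objects whose class equals $s\vert_U$ is precisely the paper's fiber product $\pt\times_{\pi_0(\pi_*\stG)}\pi_*\stG$, i.e.\ the maximal sub-gerbe. One small quibble: the ``delicate point'' you flag does not actually rest on freeness of the $\R^1\pi_*A$-action --- two objects with the same class in the sheaf $\pi_0(\pi_*\stG)$ are locally isomorphic simply by definition of the sheafified presheaf of connected components (this is Giraud's general fact that any stack is a gerbe over its $\pi_0$, which the paper invokes), so your verification goes through for more elementary reasons than you state.
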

\begin{proof}
  If $\stG$ comes from a gerbe on $\C$, let $\stF$ be such a gerbe and $m\colon \pi^*\stF\to \stG$ the corresponding morphism. By adjunction (cf.\ the universal property that defines the operation $\pi^*$, sect.~\ref{sec:push-forw-pullb}) we obtain a morphism $n\colon \stF \to \pi_*\stG$. Since $\stF$ is a gerbe, for the sheaf of connected components we get $\pi_0(n)\colon \pt\to \pi_0(\pi_*\stG)$, hence a section of $\pi_0(\pi_*\stG)$.

  Conversely, if $\pi_0(\pi_*\stG)$ has a section, say $\xi\colon \pt \to \pi_0(\pi_*\stG)$, define $\stF = \pt\times_{\pi_0(\pi_*\stG)} \pi_*(\stG)$, which is a gerbe on $\C$, and $n\colon \stF\to \pi_0(\pi_*\stG)$ as the second projection. The latter is by construction fully faithful, hence, again by adjunction, we have the morphism $m\colon \pi^*(\stF) \to \stG$, and so $\stG$ comes from a gerbe on $\C$.
\end{proof}
\begin{remark}
  In the previous proof we have used the well known fact (but, again, ultimately due to Giraud \cite[III Prop.\ 2.1.5.3]{Giraud}) that any stack $\stS$ the projection $\stS\to \pi_0(\stS)$ makes it a gerbe on the sheaf of its connected components. For a section $\xi\in \pi_0(\pi_*\stG)$, the pullback $\xi^*(\stS)$ is the corresponding \emph{maximal sub-gerbe.}
\end{remark}
\begin{remark}
  Using Lemma~\ref{lem:exercise}, we see that Lemma~\ref{lem:tautology} is equivalent to the exactness at $\H^2(\C,\pi_*A)$ of the low term sequence
  \begin{equation*}
    \cdots \lto \H^0(\C,\R^1\pi_*A) \lto \H^2(\C,\pi_*A) \lto
    \H^2(\D,A)' \lto \cdots
  \end{equation*}
  arising from the Leray spectral sequence we recalled above, where we set $\H^2(\D,A)' = E^2_1$.
\end{remark}

\subsection{Interpretation of the maps}
\label{sec:interpretation-maps}
  
\subsubsection{The map $E^{1,0}_2 \to  E^1$}\label{sec:map-e1-0_2} This is the obvious pull-back map $\H^1(S, \pi_*A) \to \H^1(X,\pi^*\pi_*A) \to \H^1(X, A)$ using the natural adjunction $\pi^*\pi_* A \to A$ of sheaves on $X$.  This is just the composite functor 
\[
  \stTors_S(\pi_*(A)) \overset{\pi^*}{\lto} \stTors_X(\pi^*\pi_*(A))
  \lto \stTors_X(A)
\]
for the corresponding gerbes.

\subsubsection{The map $E^1\to E^{0,1}_2$}
\label{sec:map-e0-1_2}
Using that $\R^1\pi_*A$ is the sheaf associated to $U \rightsquigarrow \H^1(\pi^{-1}(U),A)$, the map is obtained by considering the class of an object $P$ of $\stTors_X(X)$ under $P \mapsto \pi_*P$.

\subsubsection{The map $E^{0,1}_2 \to E^{2,0}_2$}
This is the transgression map relative to the standard sequence arising from an injective resolution $0\to A \to I^\bullet$ on $X$. Then it is standard that
\begin{equation*}
  0 \lto \pi_*A \lto \pi_*I^0 \lto Z^1(\pi_*I^\bullet) \lto \R^1\pi_*A \lto 0\,.
\end{equation*}
Viewing it as the splicing of two short exact sequences
\begin{equation*}
  0 \lto \pi_*A \lto \pi_*I^0 \lto C \lto 0\,,\qquad
  0\lto C \lto Z^1(\pi_*I^\bullet) \lto \R^1\pi_*A \lto 0\,,
\end{equation*}
the transgression map is the composite
\begin{equation*}
  \H^0(S,\R^2\pi_*A) \lto \H^1(S,C) \lto \H^2(S,\pi_*A)\,.
\end{equation*}
The latter is obtained by taking the global objects of the composite 2-functor:
\begin{equation*}
  A \lto \stTors_S(C) \lto \stGerb_S(\pi_*A)\,.
\end{equation*}
The map on the right is the well known classifying map of the extension of $\R^1\pi_*A$ by $C$ above \cite[V~№ 3.2]{Giraud} (see also \cite{ER}).

\subsubsection{The map $E^{2,0}_2 \to E^{2}_1$}
Analogously to \ref{sec:map-e1-0_2}, we have the composite 2-functor
\begin{equation*}
  \stGerb_S(\pi_*A) \lto \stGerb_X(\pi^*\pi_*A) \lto \stGerb_X(A)\,,
\end{equation*}
where the arrow on the right is ``change of band'' functor along $\pi^*\pi_* A \to A$.
Taking isomorphism classes in the global fibers gives the composite $\H^2(S,\pi_*A) \to \H^2(X, \pi^*\pi_*A) \to \H^2(X,A)$. Now, thanks to Lemma~\ref{lem:tautology} the image is actually in $E^2_1$.

\subsubsection{The map $\theta\colon E^2_1 \to E^{1,1}_2$}
Let $\stG$ be a stack on $X$ and consider the correspondence $\stG \leadsto \pi_0(\pi_*\stG)$. This correspondence is easily seen to be a functorial one between the homotopy category of stacks—as it identifies two naturally isomorphic stack morphisms—on $X$ to that of sheaves on $S$. Therefore, by Lemma~\ref{lem:exercise}, and the subsequent sections, it reduces to a functor $\Ho (\gerb_X(A)') \to \tors_S(\R^1\pi_*(A))$, where $\gerb_X(A)'$ denotes the subcategory of those $A$-gerbes whose fiber categories over opens of the form $\pi^{-1}(U)$, for every sufficiently small open neighborhood $U\subset S$ around every point of $S$, are not empty. By taking classes, we get the map.

\subsection{Application: Brauer groups}\label{Brauer}
The map $\theta: E^2_1 \to E^{1,1}_2$ above plays a very important role in many arithmetical applications \cite{Kai, Lichtenbaum, Skorobogatov}. To recall this, let $F$ be a perfect field and consider the \'etale sheaf $\bbG_m$ on $T = \Spec F$.  Fix an algebraic closure  $\bar{F}$  of $F$ and let $\bar{T} =\Spec\bar{F}$; write $\Gamma$ for the Galois group of $\bar{F}$ over $F$. Let $g:Y\to T$ be a smooth proper map and $\bar{Y} = Y\times_T \bar{T}$.

The group $\H^2_{\et}(Y, \bbG_m)$ is the Brauer group $\Br(Y)$.  The group $E^2_1$ is the relative Brauer group $\Br(\bar{Y}/{Y})$, namely, the kernel of the map $\Br(Y) \to \Br(\bar{Y})$. The map $E^2_1 \to E^{1,1}_2$ then becomes the  map
\[
  \theta \colon \Br(\bar{Y}/Y) \lto \H^1_{\et}(T, \R^1g_* \bbG_m)\,,
\]
arising in several contexts. For instance, for any elliptic curve $E$ over $F= \bbQ$, the map $\theta$ gives the well known isomorphism
\[
  \Br(E) \simeq \H^1(\bbQ, E(\bar{\bbQ})) =
  \H^1_{\et}(\Spec\bbQ, E)\,.
\]
The following explicit description of $\theta$ seems to be missing in the literature: Given an element $\alpha$ of $ \Br(\bar{Y}/Y)$, pick a $\bbG_m$ gerbe $G$ on $Y$ representing $\alpha$. By definition, the base change $\bar{G}$ on $\bar{Y}$ is trivial. Fix an equivalence $f\colon  \bar{G} \simeq \tors_{\bar{Y}}(\bbG_m)$. Then, given any $\sigma \in \Gamma$, the gerbe $\sigma^*\bar{G}$ is equivalent to $\bar{G}$ as $G$ comes from $Y$. Write $f_{\sigma}$ for the resulting equivalence of $\tors_{\bar{X}}(\bbG_m)$:
\[
  f_{\sigma}\colon \tors_{\bar{Y}}(\bbG_m) \overset{f}{\longleftarrow}
  \bar{G} \simeq \sigma^*\bar{G} \xrightarrow{\sigma^*f} \sigma^* \tors_{\bar{Y}}(\bbG_m) = \tors_{\bar{Y}}(\bbG_m)\,.
\]
Any self-equivalence  \cite[\S5.1]{Milne}
\[
  \tors_{\bar{Y}}(\bbG_m) \simeq \tors_{\bar{Y}}(\bbG_m)
\]
is a translation by a fixed $\bbG_m$-torsor $L$, namely the self-equivalence is of the form $(-) \mapsto (-) + L$.  Therefore, if $L_{\sigma}$ is the $\bbG_m$-torsor on $\bar{Y}$ corresponding to $f_{\sigma}$, then the map $\sigma \mapsto L_{\sigma}$ represents the element $\theta(\alpha)$ of $\H^1_{\et}(T, \R^1g_*\bbG_m)$.

\section{Gerbes and categorical intersection of divisors}
\label{cat-int-div}
In this section, we prove Theorems \ref{prop1} and \ref{prop2}. 

Let $Y$ be a smooth variety over $F$. Let $\cK_i$ denote the Zariski sheaf associated with the presheaf $U \mapsto K_i(U)$. 

\subsection{Heisenberg groups}

For any pair abelian sheaves $A$ and $B$ on $Y$, we have constructed~\cite{ER} a Heisenberg sheaf $H_{A,B}$ (of nilpotent groups) which fits into an exact sequence
\begin{equation}
  \label{eq:heisenberg}
  0 \lto A\otimes B \lto H_{A,B} \lto A\times B \lto 0\,,
\end{equation}
providing a categorification of the cup-product 
\begin{equation}
  \label{cupproduct}
  \H^1(Y,A) \times \H^1(Y,B) \lto \H^2(Y, A\otimes B)
\end{equation}
in the following manner. Given an $A$-torsor $P$ and a $B$-torsor $Q$, the $A\times B$-torsor $P\times Q$ can be lifted locally to a $H_{A,B}$-torsor in several ways. These local lifts assemble to a $A\otimes B$-gerbe $G_{P,Q}$.

Much like the bulk of section~\ref{Leray}, we can formulate the result we need in much greater generality. As in \cite[\S 3]{ER}, we assume $A$ and $B$ are abelian objects of a topos $\T$. (In the applications, we assume $\T$ to be the topos of sheaves on the Zariski, or other relevant topology, of the scheme.) Recall from \loccit
that the Heisenberg group $H_{A,B}$ is defined by the group law:
\begin{equation*}
  (a,b,t)\, (a',b',t') = (aa',bb',t + t' + a\otimes b')\,,
\end{equation*}
where $a,a'$ are sections of $A$, $b,b'$ of $B$, and $t,t'$ of $A\otimes B$. The extension~\eqref{eq:heisenberg} is set-theoretically split, \ie there is a section of the underlying map of sheaves of sets. The map
\begin{equation}
  \label{cocycle}
  f \colon (A\times B)\times (A\times B) \lto A\otimes B,
  \quad
  f(a,b,a',b') = a\otimes b',
\end{equation}
is a cocycle representing the class of the extension in $\H^2(\B_{A\times B}, A\otimes B) \iso \HH^2(K(A \times B,1), A\otimes B)$, where on the left we have the cohomology of classifying topos \cite{Giraud}, and on the right that of the corresponding Eilenberg-Mac~Lane simplicial object of $\T$. In fact these cohomologies are in turn isomorphic to $[ K(A \times B,1) , K(A \otimes B,2) ]$, the hom-set in the homotopy category \cite{MR516914,MR0491680}, and the cocycle $f$ coincides with the only non-trivial component of the characteristic map \cite[Prop.\ 3.4]{ER}.

\begin{proposition}
  \label{prop:biadditive}
  The functor
  \[
    c_{A,B} \colon \stTors (A) \times \stTors (B) \lto
    \stGerb(A\otimes B)\,,\quad P\times Q \longmapsto G_{P,Q}
  \]
  is bi-additive. On $\pi_0$, it induces the cup-product map \eqref{cupproduct}, upon choosing $\T = Y_{\mathrm{Zar}}\sptilde$.
\end{proposition}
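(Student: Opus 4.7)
The plan is to deduce biadditivity directly from the structure of the Heisenberg cocycle $f(a,b,a',b')=a\otimes b'$ in~\eqref{cocycle}, which is bilinear separately in its first and fourth arguments. Concretely, if $P$ and $Q$ are presented by $1$-cocycles $\{a_{ij}\}$, $\{b_{ij}\}$ along a suitable hypercover, then the construction in~\cite{ER} exhibits $G_{P,Q}$ as being classified by the $2$-cocycle $\{a_{ij}\otimes b_{jk}\}$ obtained by substitution into~\eqref{cocycle}. If $P'$ is another $A$-torsor with cocycle $\{a'_{ij}\}$, the Baer sum $P+P'$ has cocycle $\{a_{ij}+a'_{ij}\}$, and the associated $2$-cocycle is
\[
\{(a_{ij}+a'_{ij})\otimes b_{jk}\} = \{a_{ij}\otimes b_{jk}\} + \{a'_{ij}\otimes b_{jk}\}\,,
\]
which is precisely the cocycle of the Baer sum $G_{P,Q}+G_{P',Q}$ in $\gerb(A\otimes B)$.

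Next I would promote this cocycle-level identity to a natural equivalence of gerbes. Using the set-theoretic splitting of $H_{A,B}\to A\times B$, a local lift of $P\times Q$ and a local lift of $P'\times Q$ along a common refinement can be multiplied in $H_{A,B}$, and by the explicit group law $(a,b,t)(a',b',t')=(aa',bb',t+t'+a\otimes b')$ the product is a local lift of $(P+P')\times Q$. Promoting this to descent data (equivalently, passing from presentations to the actual stacks of lifts) yields a natural equivalence $G_{P+P',Q}\isoto G_{P,Q}+G_{P',Q}$ of $A\otimes B$-gerbes. The symmetric construction, using bilinearity in the second slot of $f$, produces $G_{P,Q+Q'}\isoto G_{P,Q}+G_{P,Q'}$.

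Biadditivity in the Picard $2$-categorical sense requires in addition coherence $2$-iso\-morphisms (pentagon-type constraints relating the two distributivities, compatibility with the symmetry/braiding on $\stTors$ and on $\stGerb$, and with the unit). I would verify these by working throughout with explicit hypercover cocycle data, where each coherence datum reduces to an identity already satisfied by the bilinear cocycle $a\otimes b'$ itself: associativity of Baer sum on $\stTors$ matches associativity of addition in $A$ and $B$, and the distributivities reduce to distributivity of $\otimes$ over $+$.

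For the last assertion, after passing to $\pi_0$ the computation above gives $[G_{P,Q}]=[\{a_{ij}\otimes b_{jk}\}]$, which is the standard \v{C}ech formula for the cup-product $[P]\cup[Q]\in \H^2(Y,A\otimes B)$. Equivalently, under the identification $\H^2(\B_{A\times B},A\otimes B)\iso [K(A\times B,1),K(A\otimes B,2)]$ recalled before the statement, $c_{A,B}$ is, on connected components, classified by $f$, whose only nontrivial component is the cup-product pairing. The main obstacle is not the equivalences themselves, which are essentially tautological, but the coherence bookkeeping; handling it uniformly via the Heisenberg group law, where bilinearity makes every higher cell strictly commute, is what makes the argument short.
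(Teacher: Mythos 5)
Your argument is correct in substance and hinges on the same key fact as the paper's proof — the bilinearity of the Heisenberg cocycle $f(a,b,a',b') = a\otimes b'$ — but it runs in a different place. You work locally on $Y$: fix a hypercover, present the torsors by cocycles, observe $(a_{ij}+a'_{ij})\otimes b_{jk}=a_{ij}\otimes b_{jk}+a'_{ij}\otimes b_{jk}$, and promote this to an equivalence of gerbes via the set-theoretic splitting and the group law of $H_{A,B}$. The paper instead argues in the universal case over $K(A\times B,1)$: the composite of the Baer-sum map $+_A\colon K(A\times A\times B,1)\to K(A\times B,1)$ with the characteristic map $c$ equals $c_1+c_2$ in the homotopy category, which yields a single isomorphism of central extensions $(+_A)^*H_{A,B}\iso p_1^*H_{A,B}+p_2^*H_{A,B}$ (the paper's \eqref{iso-extensions}), and every instance of biadditivity, together with the coherence diagram of Remark~\ref{rem:biadditivity}, is obtained by pulling this one isomorphism back. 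The universal formulation buys naturality and coherence essentially for free, with no bookkeeping of choices; in your version the step you describe as ``promoting this to descent data'' is where you still owe a check that the equivalence $G_{P+P',Q}\isoto G_{P,Q}+G_{P',Q}$ is independent of the chosen hypercover and cocycle presentations and is functorial in $P,P',Q$, and that the resulting $2$-cells assemble into the biextension-type compatibility. On the other hand, your explicit computation dovetails with the \v{C}ech formula $\{(a_{ij},b_{jk})\}$ that the paper itself uses later in the comparison with Deligne's construction, and your identification of the $\pi_0$-image with the cup product is exactly the paper's conclusion.
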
 

\begin{remark}[On bi-additivity]
  \label{rem:biadditivity}
  Note that for any (abelian) band $L$, $\stGerb (L)$ is really a 2-stack. Hence the notion of 2-additivity should be updated with appropriate 2-coherence data from higher algebra. This is both outside the scope of this note and inconsequential in the case at hand. Alternatively, we can mod out the 2-morphisms and consider $\stGerb (L)$ as a Picard 1-stack of $\T$. Thus, bi-additivity consists of the data of functorial equivalences
  \begin{align*}
    c_{A,B} (P_1 + P_2, Q) &\isoto c_{A,B} (P_1, Q) +c_{A,B}(P_2, Q) \\
    c_{A,B} (P, Q_1 + Q_2) &\isoto c_{A,B} (P, Q_1) +c_{A,B}(P, Q_2)
  \end{align*}
  subject to the condition that decomposing $c_{A,B} (P_1 + P_2, Q_1 + Q_2)$ according to the two possible ways determined by the above morphisms gives rise to a commutative (or commutative up to coherent 2-isomorphism) diagram. This would be exactly the kind of diagram familiar from the theory of biextensions \cite{MR0354656-VII,MR823233} (see also \cite{MR1114212}).
\end{remark}
\begin{proof}[Proof of Proposition~\ref{prop:biadditive}]
  Bi-additivity is essentially already implied by the fact that the cocycle representing the class of the extension is the tensor product, which is bilinear. It is best to look at this in the universal case, namely over $K(A \times B, 1)$—the rest follows by pullback—where the bilinearity of the tensor product has the following interpretation.

  By functoriality, from the group operation ($A$ is an abelian object) $+_A\colon A \times A \to A$ we get the map $+_A\colon K(A \times A \times B, 1) \to K(A \times B, 1)$ corresponding to the Baer sum of torsors. Its composition with the characteristic map $c \colon K(A\times B,1)\to K(A\otimes B,2)$ equals in the homotopy category the sum $c_1 + c_2$, where $c_i$, $i=1,2$, is the composition
  \begin{equation*}
    K(A \times A \times B,1) \overset{p_i}{\lto} K(A \times B,1) \overset{c}{\lto} K(A \otimes B, 2)\,;
  \end{equation*}
  the first map is induced by the projection onto the first (second) factor, as it follows from~\eqref{cocycle} and the form of $c$ computed in \cite[\S 3.4]{ER}.

  In turn, the map $c \circ (+_A)$ classifies the extension $(+_A)^* H_{A,B}$, whereas $c_1+c_2$ classifies the extension $p_1^*H_{A,B} + p_2^*H_{A,B}$—the sum is the Baer sum in this case—so that we obtain the isomorphism
  \begin{equation}
    \label{iso-extensions}
    (+_A)^* H_{A,B} \iso p_1^*H_{A,B} + p_2^*H_{A,B}
  \end{equation}
  of central extensions of $A \times A \times B$ by $A\otimes B$. Similarly for the ``variable'' $B$. Furthermore, the commutativity of the diagram alluded to in Remark~\ref{rem:biadditivity} is immediately implied by further pulling back the isomorphism~\eqref{iso-extensions} by $+_B\colon B\times B\to B$, its counterpart for $B$ via $+_A$, and again using~\eqref{cocycle}.
\end{proof}

\subsection{Proof of Theorem \ref{prop1}}
Consider the map $\mu \colon \bbG_m \times \bbG_m \to \cK_2$ obtained using the identification $\bbG_m\simeq \cK_1$ and the multiplication $\cK_1 \times \cK_1 \to \cK_2$. The functor $\cup$ defined as the composite
\[
  \tors_Y(\bbG_m) \times \tors_Y(\bbG_m)
  \xrightarrow{c_{\bbG_m, \bbG_m}}
  \gerb_Y(\bbG_m \otimes \bbG_m) \xrightarrow{\mu_*} \gerb_Y(\cK_2)
\]
is the required bi-additive functor. \qed

The functor $\cup$ is so-named as it categorifies the cup-product (which can be identified with the intersection product
\[
  \H^1(Y, \bbG_m) \times \H^1(Y, \bbG_m) \lto \H^2(Y,\cK_2) \simeq \CH^2(Y) \longleftarrow \CH^1(Y) \times \CH^1(Y)\,.
\]

\begin{remark}
  \label{rem:biext}
  The bi-additivity property of the map $c_{A,B}$ of Proposition~\ref{prop:biadditive} has the following conjectural formal interpretation. The maps $+_A$ and $+_B$, plus the commutative diagram in Remark~\ref{rem:biadditivity} and the proof of Proposition~\ref{prop:biadditive} comprise a structure that can be described as the categorification of a biextension, namely a $\stTors(A\otimes B)$-torsor (hence an $A\otimes B$-gerbe)
  \begin{equation*}
    \stH \lto \stTors(A) \times \stTors(B)
  \end{equation*}
  equipped with partial addition laws $+_A$ (resp.\ $+_B$) giving it the structure of an extension of $\stTors (A)$ (resp.\ $\stTors (B)$) by $\stTors(A \otimes B)$.
\end{remark}

\subsection{Proof of Theorem \ref{prop2}}Our proof will use the results of \S \ref{Leray} on (\ref{lowterm}) for  $\pi:X \to S$ with $A=\cK_2$ on $X$. Proposition \ref{beilinson-lemma} shows that all $\cK_2$-gerbes are horizontal (Definition \ref{horizontal}). The functor 
$\int_{\pi}$ is then defined as the composition of
\[
    \stGerb_X(\cK_2) \xrightarrow{\Theta} \stTors_S(\R^1\pi_*\cK_2) 
    \xrightarrow{\mathrm{Norm}} \stTors_S(\cK_1)\,.
\]
Our first step is to show that $\stGerb_X(\cK_2)'$ is all of   $\stGerb_X(\cK_2)$, in other words, every $\cK_2$-gerbe on $X$ is horizontal. This is proved by showing $\R^2\pi_*\cK_2=0$ which provides the isomorphism 
\[
  E^2_1 \isoto \H^2(X, \cK_2)\, .
\]
We start with the following result, implicit in \cite[A5.1 (iv)]{MR962493}, essentially due to Beilinson-Schechtman. 
\begin{proposition}[Beilinson-Schechtman]\label{beilinson-lemma}
The sheaf $\R^2\pi_*\cK_2$ is zero.
\end{proposition}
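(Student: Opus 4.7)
My plan is to verify the vanishing at every Zariski stalk $s\in S$. By Bloch--Quillen one has $\H^q(Y,\cK_2)\iso \CH^q(Y)$ for smooth $Y$, so the sheaf $\R^2\pi_*\cK_2$ is the Zariski sheafification of $U\mapsto \CH^2(\pi^{-1}U)$; passing to the filtered colimit over shrinking $V\ni s$ and using the continuity of Chow groups under filtered limits of open immersions yields
\[
(\R^2\pi_*\cK_2)_s\iso \CH^2(X_R),\qquad R:=\cO_{S,s},
\]
where $X_R:=X\times_S\Spec R$ is smooth proper of relative dimension one over the regular local ring $R$. The proposition therefore reduces to $\CH^2(X_R)=0$.

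Next, using the Gersten resolution on the smooth scheme $X_R$, I present $\CH^2(X_R)$ as the cokernel of the divisor map
\[
\operatorname{div}\colon \bigoplus_{w\in X_R^{(1)}}k(w)^*\lto \bigoplus_{w\in X_R^{(2)}}\bbZ.
\]
I classify the codimension-two points of $X_R$ into two families: type (I), closed points of fibers $X_y$ over height-one primes $y\subset R$; and type (II), generic points of fibers $X_y$ over height-two primes $y\subset R$. The codimension-one points split similarly into horizontal (closed points of the generic fiber $X_K$, $K=\operatorname{Frac}R$) and vertical (generic points of fibers over height-one primes).

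Surjectivity of $\operatorname{div}$ I would then establish by induction on $\dim R$. The base $\dim R=0$ is trivial, since then $X_R$ is a smooth projective curve and $\CH^2(X_R)=0$ for dimensional reasons. For the inductive step, each type-(II) generator $[\eta_y]$ is realized as $\operatorname{div}(u)$, where $u$ is a uniformizer of the regular one-dimensional local ring $R_y/y'R_y$ for a chosen height-one sub-prime $y'\subset y$, viewed as a constant rational function on the vertical codimension-one subvariety $X_{\overline{\{y'\}}}$; the auxiliary terms that appear lie over strictly smaller-codimension primes of $R$. Each type-(I) generator $[p]$, with $p$ closed in $X_y$ for $y$ of height one, is hit by restricting the uniformizer $\pi_y$ of $R_y$ to a horizontal curve $C\subset X_R$ passing through $p$ transversally to $X_y$; such $C$ arises locally from a regular system of parameters $(\pi_y,t)$ of $\cO_{X_R,p}$ and extends globally as the Zariski closure of its local piece.

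The principal obstacle will be the bookkeeping of ancillary contributions: each divisor computation produces extra codimension-two terms beyond the targeted one. Since those extra terms live over strictly smaller-height primes of $R$ or admit their own type-(I)/(II) treatment, a Noetherian induction on the height stratification of $\Spec R$, anchored at the $\dim R=0$ base case, should close the argument.
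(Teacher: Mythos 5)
Your opening moves are sound and consistent with the paper's: reduction to stalks, Bloch--Quillen, the identification of the stalk with $\CH^2(X_R)$ for $R=\cO_{S,s}$, and the classification of codimension-two points of $X_R$ by the height of their image, which is exactly where relative dimension one enters (no codimension-two point lies over the generic point of $R$). The gap is in the induction that is supposed to absorb the auxiliary terms: those terms do \emph{not} lie over strictly smaller-codimension primes of $R$, so the proposed Noetherian induction is not well-founded. Concretely, for a type-(II) point $\eta_y$ over a height-two prime $y$: even after choosing $y'\subset y$ so that $R_y/y'R_y$ is a DVR (which is possible locally, though $R/y'$ itself need not be regular --- there are height-two primes $y\subset\mathfrak{m}^2$, e.g.\ the ideal of the monomial curve $(t^3,t^4,t^5)$), the divisor of a constant $u\in k(y')^*$ on the vertical divisor $\pi^{-1}(V(y'))$ equals $\pi^*\bigl(\mathrm{div}_{V(y')}(u)\bigr)$; its horizontal contributions vanish, and its support consists of fibers over \emph{other height-two primes} containing $y'$ --- the same codimension as $y$. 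Modulo type-(I) cycles you are therefore trying to prove $\CH^2(\Spec R)=0$ by hand, which is (a case of) Gersten's conjecture for $R$: true by Quillen for $R$ essentially smooth over a field, but not reachable by this bookkeeping. Similarly, for a type-(I) point $p$: since $R$ is a UFD one may take $\pi_y$ a prime element with $\mathrm{div}_{\Spec R}(\pi_y)=[y]$ exactly, and then $\mathrm{div}_C(\pi_y)=[p]+\sum_{q}n_q[q]$ where the $q$ are the \emph{other} points of $C\cap X_y$, with $n_q>0$ --- auxiliary terms over the very same height-one prime $y$. Killing these requires knowing that every zero-cycle on the fiber $X_y$ is, modulo rational equivalence, a combination of specializations of horizontal curves, which is again not a formal bookkeeping statement.

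The paper closes exactly this gap with a moving argument at finite level: represent the class by a codimension-two cycle $Z$ on $\pi^{-1}(U)$, use Chow's moving lemma on the smooth quasi-projective $\pi^{-1}(U)$ to replace $Z$ by $Z'$ meeting $Y=\pi^{-1}(V)$ properly, observe that $\dim\pi(Z'\cdot Y)<\dim V$ precisely because the fibers are curves, and then shrink $U$ so that $Z'$ misses $Y$ entirely, whereupon the class dies by localization. If you want to keep your Gersten-complex formulation you need a substitute for that step --- for instance, invoke Quillen's exactness of the Gersten complex for $\Spec R$ to dispose of type (II), and a genuine geometric input (moving lemma or a specialization-surjectivity statement for zero-cycles on the fibers) for type (I). As written, the proposal does not constitute a proof.
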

This gives a map 
\begin{equation}\label{map-bson}
 \theta \colon  \H^2(X, \cK_2) \lto  \H^1(S, \R^1\pi_*\cK_2)
\end{equation}
using
\[
  \H^2(X, \cK_2)\leftiso E^2_1 \lto \H^1(S, \R^1\pi_*\cK_2)\,.
\]
\begin{proof} Let $N$ be the dimension of $S$, so that $X$ has dimension $N+1$.

For any $s\in S$, we have to show that the stalk of $\R^2\pi_*\cK_2$ at $s$ is zero. By definition, this is the direct limit
\[
  \dirlim_{s\in U}\R^2\pi_*\cK_2(U) = 
  \dirlim_{s\in U}\H^2(\pi^{-1}(U), \cK_2) =
  \dirlim_{s\in U}\CH^2(\pi^{-1}(U))\,,
\]
where the last equality comes from the Bloch-Quillen isomorphism (valid for any smooth variety $V$)
\[
  \H^2(V, \cK_2) \isoto \CH^2(V)\,.
\]
So, we have to show that for any $s \in S$, any open set $U$ containing $s$, and any codimension two cycle $Z$ in $\pi^{-1}(U) \subset X$, there exists an open subset $U' \subset U$ such that the class of $Z$ goes to zero under the map
\[
  \CH^2(\pi^{-1}(U)) \lto \CH^2(\pi^{-1}(U'))\,.
\]
This is clear when $s$ is the generic point $\Spec F(S)$ of $S$: in this case, we take $U'$ to be the complement of $\pi(\abs{Z})$ in $U$. Here we have written $\abs{Z}$ for the support of $Z$. 

The next the case is when $s$ is a point of codimension $i>0$, corresponding to a codimension $i$ subvariety $V$ of $S$. Let us write $Y\subset X$ for $\pi^{-1}(V)$; then $Y$ is a subset of $X$ with codimension $i$. 

For any open $U \subset S$, the condition $s\in U$  means $U \cap V$ is non-empty. Let $U$ be such an open set. There are two cases to consider:
\begin{description}
\item[Case 1] If $\abs{Z}$ is disjoint from $Y$, then we can proceed as before as $\pi(Z)$ is disjoint from $V$, so we take $U'$ to be the complement of $\pi(\abs{Z})$ in $U$. Since $U' \cap V = U \cap V$, we see that $U'\cap V$ is non-empty.

Since $Z$ is in the kernel of the localization sequence for Chow groups 
\[
  \CH^2(\pi^{-1}(U)) \to \CH^2(\pi^{-1}(U) - \abs{Z}) \to 0\,,
\]
it is also in the kernel of the composite map
\[
  \CH^2(\pi^{-1}(U)) \to \CH^2(\pi^{-1}(U) - \abs{Z}) \to \CH^2(\pi^{-1}(U'))\,.
\]
This finishes the proof in this case.
\item[Case 2] If $\abs{Z}$ is not disjoint from $Y$, we can find a codimension two cycle $Z'$ in $\pi^{-1}(U)$ with $[Z] = [Z']\in \CH^2(\pi^{-1}(U))$ which intersects $Y$ transversally. The codimension of the cycle $Z'.Y$ is $i+2$, because its dimension (= maximum of the dimensions of the irreducible components) is $N+1 - i-2 = N-1-i$. Hence the dimension of the image $\pi(Z'.Y)$ is at most $N-1-i$, and so its support $\abs{\pi(Z'.Y)}$ is a proper closed subset of $V = \pi(Y)$. If $U''$ is the complement of $\abs{\pi(Z'.Y)}$ in $U$, then the intersection of $U''$ and $V$ is empty. By definition, the cycle $Z' \cap \pi^{-1}(U'')$ is disjoint from $Y$. This means that the image of $Z'$ (= image of $Z$) under the map
\[
  \CH^2(\pi^{-1}(U)) \lto \CH^2(\pi^{-1}(U''))
\]
is a cycle disjoint from $Y$. By Case 1, we can shrink $U''$ further to $U'$ such that $Z'$ (and hence $Z$ also) is in the kernel of the map
\[
  \CH^2(\pi^{-1}(U'')) \lto \CH^2(\pi^{-1}(U'))\,,
\]
as required.\end{description}
\end{proof}
This gives the functor $\Theta$ appearing in the definition of  
\[ 
    \int_{\pi} \colon \stGerb_X(\cK_2) \xrightarrow{\Theta} \stTors_S(\R^1\pi_*\cK_2) 
    \xrightarrow{\mathrm{Norm}} \stTors_S(\cK_1).
\]
Our next step is the definition of the map $\R^1\pi_*\cK_2 \lto \mathcal{O_S^*}$. 
\begin{remark}
  The same proof shows that if $f\colon Y \to T$ is a smooth proper map of dimension $n$ with $Y$ and $T$ smooth, then $\R^jf_*\cK_j =0$ for all $j>n$. This says that the relative Chow sheaves $\CH^j(Y/T)$ vanish for all $j>n$.
\end{remark}
\subsection{The norm map $\R^1\pi_*\cK_2 \lto \mathcal{O_S^*}$}\label{Norm} This well known map \cite[3.4]{Rost}, \cite[pp.~262-264]{Gillet} arises from the covariant functoriality for proper maps of Rost's cycle modules (Chow groups in our case). We provide the details for the convenience of the reader. Our description proceeds via the Gersten sequence (a flasque resolution of the Zariski sheaf $\cK_2$ on $X$)
\begin{equation}\label{gersten}
    0\lto \cK_2 \lto \eta_*\cK_{2,\eta} \lto \bigoplus_{x \in X^{(1)}} i_*K_1(k(x)) \lto \bigoplus_{y\in X^{(2)}} i_*K_0(k(y)) \to 0\,; 
\end{equation}
here $\eta: \Spec F(X) \to X$ is the generic point of $X$ and $X^{(i)}$ denotes the set of points of codimension $i$ of $X$. 
For any $U$ open in $S$, the norm map
\begin{equation*}
    \H^1(\pi^{-1}(U), \cK_2) \lto \mathcal O^*_S(U)
\end{equation*}
is obtained as follows. Since the first group is the homology at degree one of (\ref{gersten}), we proceed by constructing a map
\[
  \bigoplus_{x \in \pi^{-1}(U)^{(1)}} i_*K_1(k(x)) \to \mathcal O^*_S(U)\,.
\]
For each such $x \in \pi^{-1}(U)$ of codimension one, the map $x \to \pi(x)$ is either finite or not, and it is zero in the second case. In the first case, there is a norm map
\[
  k(x)^* \to k(\pi(x))^*\,;
\]
since $x$ has codimension one in $X$, its image $\pi(x)$ is the generic point of $S$ and hence the above norm map is a map
\[
  k(x)^* \to F(S)^*\,.
\]
An element of $\H^1(\pi^{-1}(U), \cK_2)$ arises from a finite collection of functions $f_x\in k(x)^*$ (for $x\in \pi^{-1}(U)$ of codimension one which is finite onto its image) which is in the kernel of the map
\[
  \bigoplus_{x \in \pi^{-1}(U)^{(1)}} i_*K_1(k(x)) \to \bigoplus_{y \in \pi^{-1}(U)^{(2)}} i_*K_0(k(y))\,.
\]
On each component, this is the ord or valuation map. One checks that this means that the (finite) product of the norms of $f_x$ is an element of $F(S)^*$ with no poles on $U$ and hence defines an element of $\mathcal O_S^*(U)$.  This gives the required functor
\[
    \stTors_S(\R^1\pi_*\cK_2) \xrightarrow{\mathrm{Norm}} \stTors_S(\cK_1)\,,
\] 
completing the definition of the functor $\int_{\pi}$ of Theorem \ref{prop2}.

\section{Comparison with Deligne's construction}
\label{comp-deligne}

Given line bundles $L$ and $M$ (viewed as $\bbG_m$-torsors) on $X$, consider the $\cK_2$-gerbe $G_{L,M}$ on $X$. By Proposition \ref{beilinson-lemma}, the element $[G_{L,M}]$ of $\H^2(X, \cK_2)$ actually lives in $E_1^2$ and hence $G_{L,M}$ is horizontal. By Lemma \ref{lem:exercise}, $\Theta(G_{L,M})$ is a $\R^1\pi_*\cK_2$-torsor. By definition, $\int_{\pi}G_{L,M}$ is its pushforward along the norm map of \S \ref{Norm},
\[
 \mathrm{Norm} \colon \R^1\pi_*\cK_2 \lto \mathcal{O_S^*}\,,
\]
which gives a line bundle $(L,M)$ on $S$. In this section, we show that this gives Deligne's line bundle $\< {L,M}\>$. Since  $(L,M)$ is bi-additive and its construction is functorial, this reduces to showing the identity in Theorem \ref{prop3}:
\[
    \< {\mathcal{O}(D), \mathcal{O}(E)} \iso ( {\mathcal{O}(D), \mathcal{O}(E)})\,,
\]
for any relative Cartier divisors $D$ and $E$ on $X$ with $D$ effective.

\subsection{Comparison}

To show that $\< {\mathcal{O}(D), \mathcal{O}(E)}$ is isomorphic to $( {\mathcal{O}(D), \mathcal{O}(E)})$, one just has to show that they are equal in  $\H^1(S, \mathcal{O}^*)$. This amounts to showing that the diagram below is commutative:
 \begin{equation}\label{diagram}
    \begin{tikzcd}
    & \H^1(X, \mathcal{O}^*) \ar[dl,"\eta"'] \ar[dr,"\cup"] \\
      \H^1(D, \mathcal{O}^*) \ar[rr,"\lambda"] \ar[d,"N_{D/S}"']  && \H^2(X, \cK_2) \ar[d,"\theta"] \\
      \H^1(S, \mathcal{O}^*)  && \H^1(S, \R^1\pi_*\cK_2) \ar[ll,"\mathrm{Norm}"']
    \end{tikzcd}
  \end{equation}
The map $\eta$ is the restriction to $D$ of a line bundle $\mathcal{O}(E)$. The map $\cup$ sends $\mathcal{O}(E)$ to its cup-product with $\mathcal{O}(D)$. The boundary map $\lambda$ in the localization sequence
\[0
  \to \cK_{2,X} \lto j_*\cK_{2,U} \lto i_*\cK_{1,D} \to 0
\]
for $X$, $U= X - D$, and $D$. The map $\theta$ is the map (\ref{map-bson})
\[
  \H^2(X, \cK_2) \lto  \H^1(S, \R^1\pi_*\cK_2)\,.
\]
The commutativity of (\ref{diagram}) is an implicit consequence of the axiomatics of Rost \cite{Rost}, but we provide a direct proof. 

\subsubsection{The top triangle of (\ref{diagram})} We first prove the commutativity of the top triangle of (\ref{diagram}). Let $\{U_i\}$ be a Zariski open cover of $X$ such that $D$ and $E$ are principal divisors on $U_i$. Let $\{f_i\}$ be defining equations for $D$ and $\{g_i\}$ be defining equations for $E$. Then, 
\[
  \{a_{ij}:= \frac{f_i}{f_j} \in \mathcal O^*(U_i \times U_j)\}, \quad
  \{b_{ij}:= \frac{g_i}{g_j} \in \mathcal O^*(U_i \times U_j)\}
\]
are cocycle representatives for  $\mathcal O(D)$ and $\mathcal O(E)$. By the explicit description \cite[(1-18)]{MR2362847} of the cup-product map in \v{C}ech cohomology, the map $\cup$ sends $\{b_{ij}\}$ to the $2$-cocycle
\begin{equation}
  \label{cup-product}
  \{(a_{ij}, b_{jk})\}\in K_2(U_i \times U_j \times U_k)\,.
\end{equation}

Given a cocycle $s_{ij}\in \mathcal O^*(U_i \times U_j \times D)$ relative to the cover $\{U_i \times D\}$ of $D$, one computes its image under $\lambda$ as follows. Pick $\tilde{s}_{ij} \in K_2 (U_i \times U_j \times U)$ whose tame symbol along $D$ is $s_{ij}$; then check that its \v{C}ech boundary $\partial( \tilde{s}_{ij})$ (a $2$-cochain with values in $\cK_{2,U}$) is zero when viewed as a cochain with values in $i_*\cK_{1,D}$. This means that $\partial (\tilde{s}_{ij})$ is a $2$-cocycle with values in $\cK_{2,X}$; this is defined to be the image of $s_{ij}$ under $\lambda$. Let us apply this to compute the image of $\mathcal O(E)\vert_D$ under $\lambda$.

Let $\bar{b}_{ij}$ be the image of $b_{ij}$ under the map
\[
  \mathcal O^*(U_i \times U_j) \to \mathcal O^*(U_i \times U_j \times D)\,.
\]
The cocycle $\{\bar{b}_{ij}\}$ represents $\mathcal O(E)\vert_D$. To compute its image under $\lambda$, consider the element (symbol)
\[
  t_{ij} = (f_i,b_{ij}) \in K_2(U \times U_i \times U_j).
\]
We know that $b_{ij}$ is a unit in $U_i \times U_j$ and so defines an element of $K_1(U_i \times U_j)$; we know $f_i$ is the defining equation of $D$ on $U_i$ and so it is a unit on $U \times U_i$ and thus $f_i$ defines an element of $K_1(U \times U_i)$. So $t_{ij}$ is a well-defined element of $K_2(U \times U_i \times U_j)$. If $v$ denotes the valuation
\[
  F(X)^* \lto \mathbb{Z}
\]
defined by the divisor $D$, the tame symbol map is the map
\[
  K_2(U) \to K_1(D), \quad (a,b) \mapsto (-1)^{v(a)v(b)}.~ \overline{\bigg(\frac{a^{v(b)}}{b^{v(a)}}\bigg)}\,.
\]
Since $v(f_i) =1$ and $v(b_{ij}) =0$, we see that $t_{ij}$ maps to the element 
\[
  (-1)^{1 \times 0}.~\overline{\bigg(\frac{f_i^0}{b_{ij}^1}\bigg)} = \bar{b}_{ij}^{-1}\,.
\]
So the cochain $\{t_{ij}\}$ lifts the inverse of the cocycle $\{\bar{b}_{ij}\}$. Its \v{C}ech boundary (which represents the image under $\lambda$ of the inverse of $\{\bar{b}_{ij}\}$)
\[
  t_{ij} - t_{ik} + t_{jk} = (f_i,b_{ij}) - (f_i,b_{ik}) + (f_j,b_{jk})
\]
is a $2$-cocycle with values in $\cK_2$. Since 
\[
  \biggl\{b_{ij} = \frac{g_i}{g_j}\biggr\}
\]
is a cocycle, the relation 
\[
  b_{ik} = b_{ij}+ b_{jk}
\]
holds. Using this, the image of the inverse of $\{\bar{b}_{ij}\}$ under $\lambda$ is given by the negative of the element in (\ref{cup-product}):
\[
  (f_i, b_{ij}) - (f_i, b_{ij}) -(f_i, b_{jk}) +(f_j, b_{jk})
  = (\frac{f_j}{f_i}, b_{jk}) = -(a_{ij}, b_{jk})\,.
\]
This says that $\lambda$ maps $\{\bar{b}_{ij}\}$ to the class of the cup product of $\mathcal O(D)$ and $\mathcal O(E)$ in $\H^2(X ,\cK_2)$ thus completing the proof of the commutativity of the top triangle in (\ref{diagram}).

\subsubsection{The bottom square of (\ref{diagram})}
We begin with an explicit description of the map 
\[
  \theta \colon \H^2(X, \cK_2) \to \H^1(S, \R^1 \pi_*\cK_2)
\]
in (\ref{map-bson}).

Let $G$ be a $\cK_2$-gerbe on $X$. As $\CH^2(X)= \H^2(X,\cK_2)$ (Bloch-Quillen), we can pick a codimension-two cycle $c$ representing $[G]$ on $X$. As $G$ is horizontal, there exists an open cover $\{V_{\alpha}\}$ of $S$ such that $[G] =0\in \H^2(W_{\alpha}, \cK_2)$, with $W_{\alpha} = \pi^{-1}(V_{\alpha})$; note $\{W_{\alpha}\}$ is an open cover of $X$. In terms of the Gersten complex
\[
  0\lto \cK_2 \lto \eta_*\cK_{2,\eta} \lto
  \bigoplus_{x \in W_{\alpha}^{(1)}} K_1(k(x)) \xrightarrow{\mathrm{ord}}
  \bigoplus_{y\in W_{\alpha}^{(2)}} K_0(k(y)) \to 0\,, 
\]
which computes the cohomology of $\cK_2$ on $W_{\alpha}$, we have that the vanishing in $H^2(W_{\alpha}, \cK_2)$ of the  restriction $c_{\alpha}$ of the codimension-two cycle $c$ representing $[G]$ on $W_{\alpha}$. Then, there exists an element $h_{\alpha} \in \bigoplus_{x \in W_{\alpha}^{(1)}}~K_1(k(x))$ such that $\textrm{ord}(h_{\alpha}) =c_{\alpha}$ in the sequence on $W_{\alpha}$. So $h_{\alpha}$ is a collection of divisors in $W_{\alpha}$ whose associated functions cut out together the codimension-two cycle $c$. Since $\textrm{ord}(h_{\alpha}) = \textrm{ord}(h_{\alpha'})$ on $W_{\alpha} \cap W_{\alpha'}$, we see that the element 
$r_{\alpha,\alpha'}\coloneq h_{\alpha} -h_{\alpha'}$  on $W_{\alpha} \cap W_{\alpha'}$ defines an element of $\H^1(W_{\alpha} \cap W_{\alpha'}, \cK_2)$. The cocycle condition is a formal consequence:
\[
  r_{\alpha,\alpha'} + r_{\alpha',\alpha''} + r_{\alpha'',\alpha} =0\,.
\]
Namely, $\{r_{\alpha,\alpha'}\}$ defines a Čech $1$-cocycle on $S$ with values in $\R^1 \pi_*\cK_2$; this is the element $\theta(G)$. Taking norms down to $S$ gives a Čech $1$-cocycle
\[ 
    \tilde{r}_{\alpha,\alpha'} \coloneq N_{D/S}\biggl( \frac{h_{\alpha}}{h_{\alpha'}} \biggr)
\] 
with values in $\mathbb G_m$ on $S$. This completes the description of the maps in the bottom square of (\ref{diagram}).

With all this in place, it is now easy to show that the bottom square of (\ref{diagram}) commutes. Recall the defining equations $g_i$ of $E$ relative to the open cover $\{U_i\}$ of $X$. Restricting $\mathcal O(E)$ to $D$ and applying $\lambda$ gives the gerbe  $G= G_{\mathcal O(D), \mathcal O(E)}$, by the commutativity of the top triangle of (\ref{diagram}). We use the above description to compute the image of the gerbe under $\theta$; we see that $h_{\alpha}$ can be taken to be the collection of functions $\bar{g}_{i, \alpha} = \bar{g}_i\lvert_{D_{\alpha, i}}$ on $D_{\alpha, i}= D \cap W_{\alpha}\cap U_i$ which cuts out the codimension-two cycle corresponding to the intersection of $D$ and $E$. The norm down to $S$ of the corresponding $r_{\alpha, \alpha'}$ gives the Čech-cocycle with values in $\cK_1$ of $S$; this is the image of $\mathcal O(E)$ along one part of the bottom square in (\ref{diagram}).   

On the other hand, consider the image of $\mathcal O(E)$ under the left vertical map of (\ref{diagram}). Let
\[
    \bar{g}_{i, \alpha} = \bar{g}_i\lvert_{D_{\alpha, i}}, \quad 
    e_{\alpha}= \prod_{i}N_{D_{\alpha, i}/{S}} \bigl(\bar{g}_{i, \alpha}\bigr) \,.
\]
The image of $\mathcal O(E)$ under the map $N_{D/S}$ is given by the cocycle
\[
    c_{\alpha, \alpha'}\coloneq \frac{e_{\alpha}}{e_{\alpha'}}\in 
    \cO^*(V_{\alpha}\cap V_{\alpha'})\,.
\]
It is clear that $c_{\alpha, \alpha'}$ is equal to $\mathrm{Norm}(r_{\alpha, \alpha'})$. This shows the commutativity of the diagram (\ref{diagram}), since the image of $\cO(E)$ along the vertical left map of (\ref{diagram}) gives Deligne's line bundle $\<{\cO(D), \cO(E)}\>$, and the image along the other side of (\ref{diagram}) is
\[(
    \mathcal O(D), \mathcal O(E)) = 
    \mathrm{Norm}\circ \Theta\circ\cup (\cO(D), \cO(E)) = 
    \mathrm{Norm}\circ \Theta\circ (G_{\cO(D), \cO(E)}) = 
    \int_{\pi} G_{\cO(D), \cO(E)}.
\]
This proves Theorem \ref{prop3} and therefore Theorem \ref{Main}.

\section{Picard stacks and their endomorphisms}
\label{sec:picard-stacks-endom}

Here and elsewhere in this paper ``Picard stack,'' or ``Picard category,'' means ``strictly commutative Picard'' in the sense of Deligne \cite{10.1007/BFb0070724}. Namely, if we denote the monoidal operation of $\stP$ simply by $\mathnormal{+} \colon \stP \times \stP \to \stP$, then the symmetry condition given by the natural isomorphisms $\sigma_{x,y} \colon x + y \to y + x$ must satisfy the additional condition that $\sigma_{x,x} = \id_{x+x}$. Such stacks have the pleasant property that there exists a two-term complex of abelian sheaves $d\colon A^{-1} \to A^0$ such that $\stF$ is equivalent, as a Picard stack, to the one associated to the action groupoid formed from the complex. We denote this situation by
\begin{equation*}
  \stP \simeq \bigl[ A^{-1} \stackrel{d}{\lto} A^0 \bigr]\sptilde\,.
\end{equation*}
A classical example arises from the well known divisor exact sequence of Zariski sheaves
\begin{equation}\label{divisorsequence}
 0 \to \bbG_m \to \eta_*F(Y)^{\times} \to \bigoplus_{y\in Y^1} (i_y)_*\mathbb Z \to 0\,,
\end{equation}
where $Y$ is a smooth scheme over a field $F$ and the sum is over the set $Y^1$ of points of codimension one in $Y$. We get the equivalence:
\begin{equation*}
  \stTors_Y(\bbG_m) \simeq
  \bigl[ \eta_*F(Y)^{\times} \to \bigoplus_{y\in Y^1} (i_y)_*\mathbb Z \bigr]\sptilde
\end{equation*}
In the sequel we shall denote by $\stCH^1_Y$ the Picard stack on the right hand side of the above relation and by $\CHcat^1(Y)$ the Picard category of its global sections. Therefore we have $\tors_Y(\bbG_m) \simeq \CHcat^1(Y)$.

Still from \cite{10.1007/BFb0070724}, we have that morphisms and natural transformations form a Picard stack $\stHom(\stP,\stQ)$, where the additive structure is defined pointwise: if $F,G$ are two objects, then $(F+G)(x) \coloneq F(x) +_{\stQ} G(x)$, for any object $x$ of $\stP$. It is immediate to verify that this is symmetric and in fact strictly commutative if $+_{\stQ}$ is.

\subsection{Ring structures}
\label{sec:ring-structures}

We set $\stEnd (\stP) = \stHom (\stP,\stP)$. By the above considerations, it is a Picard stack, but the composition of morphisms gives it an additional unital monoidal structure, with respect to which $\stEnd (\stP)$ acquires the structure of a stack of ring groupoids—also known as categorical rings—of the sort described in \cite{rings-tac2015} (see also \cite{drinfeld2021notion} for a résumé). Note that the ``multiplication'' monoidal structure, being given by composition of functors, is strictly associative.
\begin{remark}
  \label{rem:butterflies}
  If $\stP \simeq [A^{-1}\to A^0]\sptilde$, then by \cite{ButterfliesI} $\stEnd(\stP)$ is equivalent, as a stack of ring groupoids, to $\stCorr(A^\bullet,A^\bullet)$, the stack whose objects are butterfly diagrams: an object is given by an extension $0 \to A^\bullet \to E \to A^0\to 0$ such that its pullback to $A^{-1}$ via $d \colon A^{-1}\to A^0$ is trivial. Morphisms are morphisms of extensions. $\stCorr(A^\bullet,A^\bullet)$ is a stack of ring groupoids: the ``$+$'' is given by the Baer sum of extensions; the ``$\times$'' is given by concatenation of butterflies described in \loccit This structure is associative, but not strictly so.
\end{remark}

\subsection{Quotients and colimits}
\label{sec:quotients-colimits}

Let $F\colon \stP\to \stQ$ be a morphism of Picard stacks. Its cokernel $\Coker F$ is the stack associated to the following construction, the details of which can be found in the literature (see, \eg \cite{VITALE2002383,kv2000}).\footnote{This is valid for Picard categories and stacks that not necessarily strictly commutative.} Let assume $F \colon \mathbf{P}\to \mathbf{Q}$ is a morphism of Picard \emph{categories.} The cokernel $\Coker F$ is a Picard category defined as follows:
\begin{enumerate}
\item its class of objects is the same as that of $\mathbf{Q}$;
\item a morphism $[f , a] \colon x \to y$ is an equivalence class of pairs $(f, a)$, where $f$ is morphism $f \colon x \to x + F(a)$ in $\mathbf{Q}$,  $a \in \Obj\mathbf{P}$, and two pairs $(f,a)$ and $(g,b)$ are equivalent if there exists an arrow $u \colon a\to b$ in $\mathbf{P}$ and the diagram
  \begin{equation*}
    \begin{tikzcd}[sep=small,cramped]
      & x \ar[dl,"f"'] \ar[dr,"g"] \\
      y + Fa) \ar[rr,"y+F(u)"'] && y + F(b)
    \end{tikzcd}
  \end{equation*}
  commutes.
\item The monoidal structure is defined to be that of $\mathsf{Q}$ on objects and by the class of the composite arrow
  \begin{equation*}
    \begin{split}
      x + y \to (x + F(a)) + (y + F(b)) &\isoto (x + F(a)) + (y + F(b)) \\
      &\isoto (x + y) + (F(a) + F(b)) \isoto (x + y) + F(a + b)
    \end{split}
  \end{equation*}
  if $[f,a]\colon x\to x'$ and $[g,b]\colon y \to y'$.
\end{enumerate}
There is a canonical functor $p_F\colon \mathbf{Q}\to \Coker F$ which is the identity on objects and sends an arrow $f\colon x\to y$ in $\mathbf{Q}$ to the class of the composite:
\begin{equation*}
  \begin{tikzcd}[sep=small,cramped]
    x \ar[r,"f"] & y \ar[r,"\simeq"] &
    y + 0_{\mathsf{Q}} \ar[r,"\simeq"] &
    y + F (0_{\mathsf{P}})
  \end{tikzcd}\,.
\end{equation*}
There is an isomorphism $\pi_F\colon p_F\circ F \Rightarrow 0\colon \mathbf{P} \to \mathbf{Q}$ given by
\begin{math}
  \pi_{F,a} \colon
  \begin{tikzcd}[cramped,sep=small]
    F(a) \ar[r,"\simeq"] & 0_{\mathbf{Q}} + F(a)
  \end{tikzcd}\,.
\end{math}
It follows that we have an abelian group isomorphism
\begin{equation*}
  \pi_0(\Coker F) \iso
  \Coker \bigl(\pi_0(F) \colon \pi_0(\mathbf{P}) \to \pi_0(\mathbf{Q})\bigr)\,.
\end{equation*}
As mentioned, if $F\colon \stP\to \stQ$ is a morphism of Picard \emph{stacks,} we define $\Coker F$ to be the Picard stack associated to the pseudo-functor
\begin{equation*}
  U \rightsquigarrow \Coker \bigl(F_U \colon \stP (U) \to \stQ(U) \bigr)
\end{equation*}
where $U$ is in the base site.

In the following section, we apply this construction to the diagram
\begin{equation*}
  \begin{tikzcd}[cramped]
    \stP_1 \ar[r,"F_1"] & \stQ \ar[r,leftarrow,"F_2"] & \stP_2
  \end{tikzcd}
\end{equation*}
and the resulting morphism $F_1+F_2\colon \stP_1\times \stP_2\to \stQ$, defined on objects by $(F_1 + F_2) (x_1 , x_2) = F_1(x_1) + F_2(x_2)$. We shorten or notation and simply write $\Coker (F_1+F_2)$ as $\stQ/(\stP_1+\stP_2)$. By the above recollection we have
\begin{equation*}
  \pi_0\bigl( \stQ/(\stP_1+\stP_2) \bigr) \iso
  \pi_0(\stQ) / (\pi_0(\stP_1) + \pi_0(\stP_2))\,.
\end{equation*}

\section{Categorification of correspondences}
\label{sec:categ-corr}

In this section, $S=\Spec~F$ and a curve $C$ is a smooth projective connected one-dimensional scheme over $S$. For simplicity, we assume (just in this section) that $F$ is algebraically closed.

The main result (Theorem \ref{nuovo}) of this section is an application of Theorem \ref{Main} using \S \ref{sec:picard-stacks-endom} to the categorification of well known identities (\ref{ek}) and (\ref{caar}) about correspondences on the self-product of a curve. We will work with the category $\cV$ whose objects are curves and the maps are (correspondences) $\Hom_{\cV}(D, C) = \Div(C \times D)$ with composition defined by product of correspondences.

In the following we are stating our results for Picard categories, but there are parallel statements for the corresponding Picard stacks.

\subsection{Categorifying $\CH^1(Y)$} For any smooth scheme $Y$ over $S$, it follows from (\ref{divisorsequence}) that the Chow group $\CH^1(Y)$ is isomorphic to the Picard group $\Pic(Y) = \H^1(Y, \bbG_m)$ of $Y$; the Picard category $\CHcat^1(Y)$ 
is canonically equivalent to the Picard category of $\mathbb G_m$-torsors or line bundles on $Y$.

The Picard category $\CHcat^1(Y)$  categorifies the Chow group $\CH^1(Y)$ of divisors:
\begin{enumerate}
\item $\CH^1(Y) = \Pic(Y) =\H^1(Y, \bbG_m) = \pi_0(\CHcat^1(Y))$;
\item Any map $f \colon Y \to Y'$ of smooth schemes defines an additive functor of Picard categories
  \begin{equation*}
    f^*: \CHcat^1(Y') \to \CHcat^1(Y), \qquad L \mapsto f^*L\,;
  \end{equation*}
  the induced map on $\pi_0$ is the pullback of divisors $f^*: \CH^1(Y') \to \CH^1(Y)$. 
\end{enumerate}

For a curve $C$, let $\Pic^0(C)$ be the kernel of the degree map $\Pic(C) \to \mathbb Z$. If $\CHcat^1(C)^0$ is the sub-Picard category of $\CHcat^1(C)$ consisting of line bundles of degree zero, then $\pi_0(\CHcat^1(C)^0) = \Pic^0(C)$.  

\subsection{Correspondences} We refer to \cite[Chapter 16]{MR1644323} for details. Let $C$ and $D$ be curves and let $\pi_C$ and $\pi_D$ be the two projections on $C\times D$. A correspondence $\alpha: D \vdash C$ from $D$ to $C$ is a divisor $\alpha$ on $C \times D$. It defines a line bundle $\cO(\alpha)$ on $C \times D$.  The correspondence $\alpha$ acts on divisors: it induces a map
\begin{equation*}
  \alpha^* \colon \Pic(D) \to \Pic(C) \quad m \mapsto  (\pi_C)_*(\pi_D^*m.\alpha)
\end{equation*}
which sends a divisor $m$ on $D$ to the pushforward along $\pi_C$ of the intersection of $\alpha$ and $\pi_D^*m$ on $C\times D$. It restricts to a map $\textrm{Pic}^0(D) \to \textrm{Pic}^0(C)$: if $m$ has degree zero, then so does $\alpha^*(m)$. We get a homomorphism
\begin{equation}
  \label{ek}
  T\colon \CH^1(C \times D) \to \Hom(\Pic(D), \Pic(C)) \to \Hom(\Pic^0(D), \Pic^0(C))\,,
  \quad \alpha \mapsto \alpha^*  
\end{equation}
as $\alpha^*$ depends only on the class of $\alpha$ in $\CH^1(C \times D)$.

Degenerate correspondences \cite[Example 16.1.2]{MR1644323} constitute the subgroup $I(D,C) =\pi_C^*(CH^1(C)) + \pi_D^*(CH^1(D))$ of $\CH^1(C \times D)$. The map $T$ induces an isomorphism \cite[Proposition 3.3, Theorem 3.9]{MR1265529}
\begin{equation}
  \label{doh}
  T\colon \frac{\CH^1(C \times D)}{I(D,C)} \to \Hom(\Pic^0(D), \Pic^0(C));
\end{equation} 
 see \cite[Chapter 11, Theorem 5.1]{MR2062673} for another proof when $F=\mathbb C$. Over  a non-algebraically closed field, the isomorphism holds if $C$ and $D$ have rational points. 

Composition of correspondences induces a ring structure on $\CH^1(C \times C)$ with $I(C,C)$ as an ideal \cite[Example 16.1.2]{MR1644323}.  It is known that 
\begin{itemize}
\item \cite[Corollary 16.1.2]{MR1644323} the map
  \begin{equation}\label{teen}
    T: \CH^1(C \times C) \to \End(\CH^1(C))\,, \quad \alpha \mapsto \alpha^*
  \end{equation}
  is a homomorphism of rings.
\item $T$ induces a ring isomorphism \cite[Example 16.1.2(c)]{MR1644323}
  \begin{equation}
    \label{caar}
    T\colon \frac{CH^1(C \times C)}{I(C,C)} \to \End(\Pic^0(C)),    
  \end{equation}
as  $F$ is algebraically closed; see \cite[Chapter 11, Theorem 5.1]{MR2062673} for a proof when $F=\mathbb C$. 
\end{itemize}

The following result provides a categorification of the above statements.
\begin{theorem}
  \label{nuovo}
  There is an additive functor of Picard categories
    \begin{equation}
      \tilde{T}\colon \CHcat^1(C \times D) \to
      \stHom(\CHcat^1(D), \CHcat^1(C)) \to
      \stHom(\CHcat^1(D)^0, \CHcat^1(C)^0)   
    \end{equation}
    which induces (\ref{ek}) on $\pi_0$. $\Tilde{T}$ has the following properties:
 \begin{enumerate}[label=(\roman*)]
  \item Let $M(D,C)= \Coker(\pi_C^*+ \pi_D^*)$ be the cokernel of the additive functors
    \begin{equation}
      \pi_C^*\colon \CHcat^1(C) \to \CHcat^1(C \times D)
      \leftarrow \CHcat^1(D) \reflectbox{$\colon$} \pi_D^*.
    \end{equation} 
     Then $\tilde{T}$ induces an additive functor
    \begin{equation}
      \tilde{T}_{D,C} \colon M(D,C) \to \stHom(\CHcat^1(D)^0, \CHcat^1(C)^0)
    \end{equation}
    which, on $\pi_0$, is (\ref{doh}).
  \item $\CHcat^1(C \times C)$ comes naturally equipped with the structure of a  categorical ring (\S \ref{sec:ring-structures}) which, on $\pi_0$, is the composition of correspondences.
  \item the functors 
    \begin{equation}
      \Tilde{T} \colon \CHcat^1(C \times C)\to \stEnd(\CHcat^1(C)), \quad
      \Tilde{T}_{C,C} \colon M(C,C) \to \stEnd(\CHcat^1(C)^0)
    \end{equation}
    are functors of categorical rings (\S \ref{sec:ring-structures}). These induce (\ref{teen}), (\ref{caar}) on $\pi_0$.
  \end{enumerate}
\end{theorem}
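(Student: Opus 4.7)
The plan is to construct $\tilde T$ directly from Theorem \ref{Main}. For a line bundle $L$ on $C \times D$ and a line bundle $M$ on $D$, set
\[
  \tilde T(L)(M) := \langle L, p_2^* M \rangle_{C\times D/C},
\]
where $p_1, p_2$ are the projections from $C \times D$. Bi-additivity of Deligne's pairing promotes this to an additive functor $\tilde T\colon \CHcat^1(C \times D) \to \stHom(\CHcat^1(D), \CHcat^1(C))$ whose effect on $\pi_0$ is the classical $\alpha \mapsto \alpha^*$ of (\ref{ek}). Restriction to $\CHcat^1(D)^0$ lands in $\CHcat^1(C)^0$ because the induced map does on $\pi_0$.

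For (i), the task is to produce a natural isomorphism from $\tilde T \circ (\pi_C^* + \pi_D^*)$ to the zero functor so that the universal property of the cokernel recalled in \S \ref{sec:quotients-colimits} yields $\tilde T_{D,C}$. By additivity it suffices to trivialize $\tilde T(\pi_C^*K)$ and $\tilde T(\pi_D^*N)$ separately on $\CHcat^1(D)^0$. For the first, the projection formula for Deligne's pairing gives
\[
  \tilde T(\pi_C^*K)(M) = \langle \pi_C^* K, \pi_D^* M \rangle_{C \times D/C} \simeq K^{\otimes \deg M},
\]
canonically trivial when $\deg M = 0$. For the second, proper base change for Deligne's pairing along $f\colon C \to \Spec F$ identifies $\tilde T(\pi_D^*N)(M)$ with the pullback $f^*\langle N, M\rangle_{D/F}$, whose natural triviality when $\deg M = 0$ is the categorical avatar of the vanishing $\CH^1(D)\cdot\CH^1(D) \subset \CH^2(D) = 0$. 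On $\pi_0$ the resulting $\tilde T_{D,C}$ reduces to (\ref{doh}).

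For (ii), I would endow $\CHcat^1(C \times C)$ with tensor product as addition and, as multiplication, the functor obtained by applying Theorem \ref{Main} to $\pi_{13}\colon C \times C \times C \to C \times C$:
\[
  L \bullet L' := \langle \pi_{12}^* L, \pi_{23}^* L' \rangle_{C^3/C^2}, \qquad
  \mathbf{1} := \cO(\Delta).
\]
Bi-additivity is Theorem \ref{prop1}; associativity is verified on $C^4$ via base change for Deligne's pairing across the cube of projections, and the unit property is a direct restriction computation along the diagonal. On $\pi_0$ this yields the classical composition of correspondences. For (iii), the required ring-functor compatibility is the natural isomorphism
\[
  \langle L \bullet L', p_2^* M \rangle_{C\times C/C} \simeq \langle L, p_2^* \langle L', p_2^* M\rangle_{C\times C/C} \rangle_{C \times C/C},
\]
again an instance of iterated base change on $C^3$, reproducing $(\alpha\beta)^* = \alpha^*\beta^*$ on $\pi_0$; compatibility of $\tilde T_{C,C}$ with the two ring structures then follows by combining this with the trivializations of (i).

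The main obstacle is the coherent categorical bookkeeping: what is a single cycle-level identity classically becomes a system of natural isomorphisms that must fit together as associators and as the distributivity data of a categorical ring in the sense of \S \ref{sec:ring-structures}. The actual content reduces to base change and projection formula \emph{isomorphisms} for Deligne's pairing, all implicit in \cite{MR902592, MR962493} and in the construction of $\int_\pi$ in Theorem \ref{prop2}; the labor is to verify they are mutually compatible on $C^3$ and $C^4$. This is diagram chasing rather than new geometric input, but it is where the real work lies.
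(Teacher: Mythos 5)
Your construction coincides with the paper's at every step: the same definition $\tilde T(\cO(\alpha))(M)=\int_{\pi_C}G_{(\cO(\alpha),\pi_D^*M)}=\langle\cO(\alpha),\pi_D^*M\rangle$, the same two trivializations on $\pi_C^*\CHcat^1(C)$ and $\pi_D^*\CHcat^1(D)$ for (i), the same product $\int_{\pi_{13}}G_{(\pi_{12}^*\cO(g),\pi_{23}^*\cO(h))}$ on $C^3$ for (ii), and the same compatibility isomorphism $\langle L\bullet L',\pi_D^*M\rangle\simeq\langle L,\pi_D^*\langle L',\cdot\rangle\rangle$ for (iii). The only divergence is one of packaging: where you invoke base change and the projection formula for the Deligne pairing as known inputs, the paper derives exactly these isomorphisms by additivity reduction to $L=\cO(x)$ for a closed point $x$ together with the norm characterization (\ref{norm-finite}), and it handles the $\pi_D^*$-trivialization at the gerbe level via $H^2(D,\cK_2)=0$ rather than by pushing forward to $\Spec F$.
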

\begin{remark}
  The assignment $C \mapsto \CHcat^1(C)$ and $g\in \Div (C\times D) \mapsto \Tilde{T}(\cO (g))$ comprise a pseudo-functor from $\cV$ (the category of curves and correspondences) to the 2-category of Picard categories (or stacks).
\end{remark}

\subsection{Proof of Theorem \ref{nuovo}} The existence of $\tilde{T}$ is provided by the following lemma.

\begin{lemma} For any correspondence $\alpha:D \vdash C$, the map $\alpha^*: \CH^1(D) \to \CH^1(C)$ is induced by an additive functor $\tilde{\alpha}^* \colon \CHcat^1(D) \to \CHcat^1(C)$. This functor restricts to a functor $\CHcat^1(D)^0 \to \CHcat^1(C)^0$. Further, if $\beta$ is another correspondence, then $\widetilde{(\alpha+\beta)}^* = \tilde{\alpha}^*+ \tilde{\beta}^*$ as additive functors.
\end{lemma}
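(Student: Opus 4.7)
The approach is to apply Theorem \ref{Main} to the projection $\pi_C \colon C\times D \to C$, which is smooth projective of relative dimension one. For a correspondence $\alpha$ on $C\times D$ we define
\[
  \tilde{\alpha}^*(M) \coloneq \int_{\pi_C} G_{\pi_D^*M,\,\cO(\alpha)}\,,
\]
so that $\tilde{\alpha}^*$ is by construction the composition
\[
  \CHcat^1(D) \xrightarrow{\pi_D^*} \CHcat^1(C\times D)
  \xrightarrow{(-)\cup\, \cO(\alpha)} \gerb_{C\times D}(\cK_2)
  \xrightarrow{\int_{\pi_C}} \tors_C(\bbG_m) \simeq \CHcat^1(C)\,.
\]
By Theorem \ref{prop3}, this agrees (for each fixed $M$) with Deligne's pairing $\<{\pi_D^*M,\, \cO(\alpha)}\>$.

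Additivity in $M$ is immediate: pullback $\pi_D^*$ is an additive functor, cup product with a fixed object in the first slot is additive by the bi-additivity of $\cup$ (Theorem \ref{prop1}), and fiber integration is additive (Theorem \ref{prop2}). On $\pi_0$, the composition realises
\[
  \Pic(D) \xrightarrow{\pi_D^*} \CH^1(C\times D) \xrightarrow{\cup\, [\alpha]} \CH^2(C\times D) \xrightarrow{(\pi_C)_*} \CH^1(C)\,,
\]
which coincides with $\alpha^*$ by the formula for the action of a correspondence on divisors. Since $\alpha^*$ preserves degree zero on $\pi_0$ by the standard theory of correspondences on curves, and since $\CHcat^1(C)^0$ is a full sub-Picard category of $\CHcat^1(C)$, the functor $\tilde{\alpha}^*$ restricts to a functor $\CHcat^1(D)^0 \to \CHcat^1(C)^0$.

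For additivity in $\alpha$, the canonical isomorphism $\cO(\alpha+\beta) \iso \cO(\alpha) \otimes \cO(\beta)$ in $\CHcat^1(C\times D)$ together with the bi-additivity of $\cup$ in the second slot yields a natural equivalence $G_{\pi_D^*M,\,\cO(\alpha+\beta)} \simeq G_{\pi_D^*M,\,\cO(\alpha)} + G_{\pi_D^*M,\,\cO(\beta)}$ in $\gerb_{C\times D}(\cK_2)$. Applying the additive functor $\int_{\pi_C}$ then produces the required equivalence $\widetilde{(\alpha+\beta)}^* \simeq \tilde{\alpha}^* + \tilde{\beta}^*$. The main subtlety is to verify that this last equivalence is natural in $M$, so that it is genuinely an equivalence of additive functors rather than a pointwise family of isomorphisms; this naturality is precisely the compatibility embedded in the bi-additivity of $\cup$ (Theorem \ref{prop1}) and the additivity of $\int_{\pi_C}$ (Theorem \ref{prop2}), and amounts to chasing the coherence diagram of Remark \ref{rem:biadditivity} through the fiber-integration functor.
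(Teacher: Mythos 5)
Your proposal is correct and follows essentially the same route as the paper: define $\tilde{\alpha}^*M = \int_{\pi_C} G_{\cO(\alpha),\pi_D^*M}$, deduce additivity in $M$ and in $\alpha$ from the bi-additivity of the cup-product construction together with the additivity of $\int_{\pi_C}$, and obtain the degree-zero restriction from the identification of the class of $\tilde{\alpha}^*M$ in $\CH^1(C)$ with $\alpha^*[M]$. Your extra attention to the naturality in $M$ of the equivalence $\widetilde{(\alpha+\beta)}^* \simeq \tilde{\alpha}^* + \tilde{\beta}^*$ is a welcome elaboration of a point the paper leaves implicit, but it is not a different argument.
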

\begin{proof} This is a simple application of Theorem \ref{Main}. Given a line bundle $M$ on $D$, consider the pair $\pi_D^*M$ and $\cO (\alpha)$ of line bundles on $C \times D$; \cite{ER} constructs a $\cK_2$-gerbe $G_{(\cO (\alpha), \pi_D^*M)}$on $C \times D$. As this gerbe  is horizontal by Proposition \ref{beilinson-lemma}, one can integrate it along $\pi_C:C \times D \to C$ to get a line bundle on $C$:
  \begin{equation*}
    \tilde{\alpha}^*M = \int_{\pi_C}~G_{(\mathcal O(\alpha), \pi_D^*M)} \,.
  \end{equation*}
 Both the additivity of $\tilde{\alpha}^*$ and the property $\widetilde{(\alpha+\beta)}^* = \tilde{\alpha}^*+ \tilde{\beta}^*$ follow from the bi-additivity (Theorem \ref{prop2}) of $G$. If the line bundle $M$ on $D$ has degree zero, then so does the line bundle $ \tilde{\alpha}^*M$ on $C$ as its class in $CH^1(C)$ is $\alpha^*(M)$ which has degree zero. 
\end{proof}
 This gives us a bi-additive functor of Picard categories
\[\CHcat^1(C \times D) \times \CHcat^1(D) \to \CHcat^1(C), \qquad (\alpha,M) \mapsto \tilde{\alpha}^*M,\] and an additive functor 
\[\tilde{T}: \CHcat^1(C \times D) \to \stHom(\CHcat^1(D), \CHcat^1(C)) \to \stHom(\CHcat^1(D)^0, \CHcat^1(C)^0)\]
where, for any pair $P$, $P'$ of Picard categories,  $\stHom(P,P')$ is the Picard category of additive functors from $P$ to $P'$. 

 Statement (i) of Theorem \ref{nuovo} concerns the factorization of $\tilde{T}$ as
\begin{equation}\label{eq2}
 \tilde{T}:M(D,C) \to \stHom(\CHcat^1(D)^0, \CHcat^1(C)^0)   
\end{equation} 
This, in turn, follows from the triviality of $\tilde{T}$ on $\pi_C^*\CHcat^1(C)$ and $\pi_D^*\CHcat^1(D)$: 
\begin{itemize}
  \item {\bf $\tilde{T}$ restricted to $\pi_D^*\CHcat^1(D)$.} 
    
    If $g:D \vdash C$ is the pullback $\pi_D^*x$ of a divisor $x$ on $D$, then $\tilde{T}(g)$ applied to a line bundle $L$ on $D$ is defined as $\int_{\pi_C}G_{(\mathcal O(g), \pi_D^*L)}$. As the construction of the $\cK_2$-gerbe is functorial, we have
    \[G_{(\pi_D^*x, \pi_D^*L)}= \pi_D^*G_{(x,L)};\] as $H^2(D, \cK_2) =0$, the $\cK_2$-gerbe $G_{(x,L)}$ on $D$ is trivializable. Since $\int_{\pi_C}$ is an additive functor, $\tilde{T}(g)(L)$ is trivializable. It follows that
     \[\tilde{T}(g): \CHcat^1(D)^0 \to \CHcat^1(C)^0\] is the trivial functor. 
   
    \item {\bf $\tilde{T}$ restricted to $\pi_C^*\CHcat^1(C)$.}
    
    If $g:D \vdash C$ is $\pi^*_Cx$ of a divisor $x$ on $C$ and $m=\sum m
    _j y_j$ is a divisor on $D$, then $\tilde{T}(g)(m)$ corresponds to the ${\mathrm deg}~m$-th power of the line bundle $\mathcal O(x)$ and hence is trivial when $m$ has degree zero. This can be seen as follows: $\tilde{T}(g)(m)$ is the object corresponding to the line bundle
    \[\<{\pi_D^*m,\pi^*_C x} = \otimes_j \<{\pi^*_D y_j, \pi_C^* x} ^{\otimes m_j}.\]
    Since $\pi_C: C \times y_j \hookrightarrow C \times D \to C$ is an isomorphism for any closed point $y_j$ of $D$, one has $\<{\pi^*_D y_j, \pi_C^* x} = \mathcal O(x)$ by (\ref{norm-finite}). By bi-additivity, $$\<{\pi_D^*m,\pi^*_C x}= (\mathcal O(x))^{\mathrm{deg}~m}.$$
    If $m$ has degree zero, then $\tilde{T}(g)(m)$ is trivializable.

So the functor 
\[\tilde{g}^*~=~\tilde{T}(g): \CHcat^1(D)^0 \to \CHcat^1(C)^0\]
is trivial.
\end{itemize}
This completes the proof of (i) of Theorem \ref{nuovo}.

\subsection{Composition}We show that $\tilde{T}$ is compatible with composition of correspondences. Let $X =C_1 \times C_2 \times C_3$ be the product of three curves $C_1, C_2, C_3$ and let $\pi_{ij}:X\to C_i \times C_j$ be the projections.  If $g:C_2\vdash C_1$ is a correspondence on $C_1 \times C_2$ and $h:C_3\vdash C_2$ on $C_2 \times C_3$, one can compose $g$ and $h$ to get a correspondence $g\circ h:C_3\vdash C_1$ on $C_1 \times C_3$:
\[g\circ h = (\pi_{13})
_* (\pi_{23}^*h~.~\pi_{12}^*g),\] 
by pulling back $g$ and $h$ to $X$ and intersecting them and pushing forward via $\pi_{13}$ to $C_1 \times C_3$. This gives a bi-additive map
\[\circ: \CH^1(C_1 \times C_2) \times \CH^1(C_2 \times C_3) \to \CH^1(C_1 \times C_3).\]
\begin{lemma}\label{l1}
The above bi-additive map is induced by a bi-additive functor
\[\tilde{\circ}: \CHcat^1(C_1 \times C_2) \times \CHcat^1(C_2 \times C_3) \to \CHcat^1(C_1 \times C_3).\]
\end{lemma}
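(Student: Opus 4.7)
The plan is to mimic the construction of $\Tilde{T}$ in the preceding lemma, replacing the single projection $\pi_C$ by the projection that forgets the ``middle curve.'' Concretely, set $X = C_1\times C_2\times C_3$ and consider
\[
  \pi_{13}\colon X \lto C_1\times C_3\,,
\]
which, after identifying $X\iso (C_1\times C_3)\times_F C_2$, is the base change of $C_2\to \Spec F$. In particular, $\pi_{13}$ is smooth projective of relative dimension one, and $C_1\times C_3$ is a smooth variety over $F$. Hence Theorems \ref{prop1} and \ref{prop2} apply to $\pi_{13}$, and we may define, for line bundles $\cO(g)$ on $C_1\times C_2$ and $\cO(h)$ on $C_2\times C_3$,
\[
  \cO(g)\,\tilde{\circ}\,\cO(h) \,\coloneq\, \int_{\pi_{13}} \bigl( \pi_{12}^*\cO(g) \,\cup\, \pi_{23}^*\cO(h) \bigr) \,\in\, \CHcat^1(C_1\times C_3)\,.
\]
Here $\pi_{12}^*\cO(g)\cup\pi_{23}^*\cO(h)$ is the $\cK_2$-gerbe on $X$ produced by Theorem \ref{prop1}, which is horizontal by Proposition \ref{beilinson-lemma}, so the integration is well defined.

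Bi-additivity in each argument is immediate from the already established bi-additivity of $\cup$ (Proposition \ref{prop:biadditive}) and the additivity of the two pullback functors $\pi_{12}^*$, $\pi_{23}^*$ on line bundles, together with the additivity of the integration functor $\int_{\pi_{13}}$ (Theorem \ref{prop2}). Thus $\tilde{\circ}$ is a bi-additive functor of Picard categories, as required. The compatibility with $\circ$ on $\pi_0$ is verified by inspecting what each step induces on Chow groups: $\pi_0$ of $\cup$ is the intersection product $\CH^1\times\CH^1\to \CH^2$ (by Theorem \ref{prop1} and the Bloch--Quillen identification), and $\pi_0$ of $\int_{\pi_{13}}$ is the proper pushforward $(\pi_{13})_*\colon \CH^2(X)\to \CH^1(C_1\times C_3)$ (by the construction of $\int_{\pi}$ via $\Theta$ and the norm map in \S\ref{Norm}). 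Composing, we obtain
\[
  \pi_0\bigl(\cO(g)\,\tilde{\circ}\,\cO(h)\bigr) \,=\, (\pi_{13})_*\bigl( \pi_{12}^*[g]\cdot \pi_{23}^*[h] \bigr) \,=\, g\circ h\,,
\]
which is the definition of the composition of correspondences recalled above the statement.

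The main (and really only) obstacle is to verify that the hypotheses required by Theorem \ref{prop2} are met for $\pi_{13}$, which amounts to the observation above that $\pi_{13}$ is a base change of the structure map of $C_2$; once this is in hand, the construction and its properties are pure assembly from the earlier results. Everything else — naturality in $g$ and $h$, and the computation on $\pi_0$ — is a formal consequence of the functoriality of $\pi_{12}^*$, $\pi_{23}^*$, $\cup$ and $\int_{\pi_{13}}$, each of which has already been established.
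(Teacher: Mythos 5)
Your construction is exactly the paper's: pull back $\cO(g)$ and $\cO(h)$ to $X=C_1\times C_2\times C_3$, form the $\cK_2$-gerbe via the cup product, note horizontality from Proposition~\ref{beilinson-lemma}, and integrate along $\pi_{13}$, with bi-additivity inherited from $\cup$ and $\int_{\pi_{13}}$. Your write-up is somewhat more explicit than the paper's (in particular about $\pi_{13}$ being a relative curve and about the $\pi_0$ computation), but it is the same proof.
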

\begin{proof}
 The functor is defined as follows: The pair $\pi_{12}^*\cO(g)$ and $\pi_{23}^* \cO(h)$ of line bundles on $X$ give rise to a $\cK_2$-gerbe $G_{(\pi_{12}^*\cO(g),\pi_{23}^* \cO(h))}$ on $X$. Since it is horizontal (Proposition \ref{beilinson-lemma}) for the map (a relative curve)  $\pi_{13}:X \to C_1 \times C_3$, we can integrate it along  $\pi_{13}$ to obtain a line bundle $\<{\pi_{12}^*\cO(g),\pi_{23}^* \cO(h)}$ on $C_1 \times C_3$. The functor $\tilde{\circ}$, in the notation of Theorem \ref{Main}, is 
 \[\tilde{\circ}:(g,h) \mapsto \int_{\pi_{13}}G_{(\pi_{12}^*\cO(g),\pi_{23}^* \cO(h))} = \<{\pi_{12}^*\cO(g),\pi_{23}^* \cO(h)}.\]
 It follows from Theorem \ref{Main} that $\tilde{\circ}$ induces $\circ$ on $\pi_0$.
 \end{proof}
 Taking $C_1=C_2=C_3=C$ proves (ii) of Theorem \ref{nuovo}. 
 
\begin{lemma}\label{compos-eh?}The functor $\tilde{T}$ is compatible with composition: namely, the diagram
 \begin{equation}
    \label{eq:100}
    \begin{tikzcd}
      \CHcat^1(C_1 \times C_2) \times \CHcat^1(C_2 \times C_3) \ar[r,"\tilde{\circ}"] \ar[d,"\tilde{T}\times \tilde{T}"'] &
      \CHcat^1(C_1 \times C_3) \ar[d,"\tilde{T}"] \\
     \stHom(\CHcat^1(C_2), \CHcat^1(C_1)) \times  \stHom(\CHcat^1(C_3), \CHcat^1(C_2))  \ar[r,""']        &  \stHom(\CHcat^1(C_3), \CHcat^1(C_1))
    \end{tikzcd},
  \end{equation}
    commutes up to natural isomorphisms $\tilde{T}(g\tilde{\circ}{h}) \iso \tilde{T}(g)\circ\tilde{T}(h)$.
\end{lemma}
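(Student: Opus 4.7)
The plan is to show that both composites in diagram~(\ref{eq:100}), applied to a line bundle $M$ on $C_3$, are canonically equivalent to a single expression on $C_1$: the fiber integration along $p_1\colon Y \coloneq C_1\times C_2\times C_3 \to C_1$ of a $\cK_2$-gerbe built functorially from the three line bundles $\pi_{12}^*\cO(g)$, $\pi_{23}^*\cO(h)$, and $p_3^*M$ on $Y$. This is the categorical shadow of the classical projection-formula/Fubini identity
\[
  (g\circ h)^*M \;=\; (p_1)_*\bigl(\pi_{12}^*g \cdot \pi_{23}^*h \cdot p_3^*M\bigr) \;=\; g^*(h^*M)
\]
in $\CH^1(C_1)$.

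The main ingredients I will need are: (a) base-change compatibility for the cup-product functor $\cup$ of Theorem~\ref{prop1} and for the fiber integration $\int_\pi$ of Theorem~\ref{prop2}; and (b) a Fubini equivalence $\int_\pi\circ \int_\rho \iso \int_{\pi\rho}$ for iterated fiber integrations, together with a categorical projection formula expressing the cup-product of a pulled-back line bundle with the integral of a gerbe as the integral of a cup-product on the total space. Base change for $\cup$ is immediate from the functoriality of the Heisenberg extension used in the proof of Proposition~\ref{prop:biadditive}; base change for $\int_\pi$ follows from its definition via $\pi_0(\pi_*(-))$ (Definition~\ref{horizontal}) and the functoriality of the push-forward of stacks described in Section~\ref{sec:push-forw-pullb}. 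Fubini, once $\int_\pi G_{L,N}$ is identified with Deligne's pairing $\<{L,N}$ via Theorem~\ref{prop3}, reduces to the classical Fubini-type property of Deligne's line bundle for a tower of smooth proper curve fibrations.

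Granting these, I will first unfold the right-hand composite. Starting from $\tilde{T}(h)(M) = \int_{q_{C_2}} G_{(\cO(h),\,q_{C_3}^*M)}$ on $C_2$, base change along the cartesian square
\[
  \begin{tikzcd}
    Y \ar[r,"\pi_{12}"] \ar[d,"\pi_{23}"'] & C_1\times C_2 \ar[d,"\pi_{C_2}"] \\
    C_2\times C_3 \ar[r,"q_{C_2}"'] & C_2
  \end{tikzcd}
\]
produces a natural isomorphism $\pi_{C_2}^*\tilde{T}(h)(M) \iso \int_{\pi_{12}} G_{(\pi_{23}^*\cO(h),\,p_3^*M)}$. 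Feeding this into $\tilde{T}(g)(-) = \int_{\pi_{C_1}} G_{(\cO(g),\,\pi_{C_2}^*(-))}$ and applying Fubini through $\pi_{C_1}\circ\pi_{12} = p_1$ together with the projection formula yields $\tilde{T}(g)(\tilde{T}(h)(M)) \iso \int_{p_1}\mathbf{G}(g,h,M)$, where $\mathbf{G}(g,h,M)$ denotes the iterated cup gerbe on $Y$. For the left-hand composite, I will expand $\tilde{T}(g\,\tilde{\circ}\,h)(M) = \int_{q'_{C_1}} G_{(g\,\tilde{\circ}\,h,\,q'_{C_3}^*M)}$ by substituting the definition of $g\,\tilde{\circ}\,h$ from Lemma~\ref{l1}, and apply the analogous Fubini/projection formula along $q'_{C_1}\circ\pi_{13} = p_1$ to arrive at the same integral $\int_{p_1}\mathbf{G}(g,h,M)$. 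Natural coherence across the two routes then gives the desired natural isomorphism $\tilde{T}(g\,\tilde{\circ}\,h) \iso \tilde{T}(g)\circ\tilde{T}(h)$.

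The hard part will be establishing the categorical Fubini and projection formula at the level of $\cK_2$-gerbes and Picard stacks with enough coherence to track through the two routes. At the level of $\pi_0$ both statements collapse to the classical projection formulas in Chow theory, and via Theorem~\ref{prop3} they correspond to well known properties of Deligne's pairing; the substantive work is upgrading them to natural isomorphisms of additive functors and verifying that the two chains of isomorphisms agree naturally in $M$. I expect this to be largely formal once the base-change functorialities developed in Sections~\ref{Leray} and~\ref{cat-int-div} are properly in place, since $\int_\pi$ is constructed from $\pi_0(\pi_*(-))$, which is manifestly compatible with composition of site morphisms.
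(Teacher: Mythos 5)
Your strategy is genuinely different from the paper's, and as written it has a real gap: the common value you propose for the two composites, $\int_{p_1}\mathbf{G}(g,h,M)$ with $p_1\colon C_1\times C_2\times C_3\to C_1$, is not an object the paper's machinery can produce. Theorem~\ref{prop2} constructs $\int_\pi$ only for $\pi$ of relative dimension one: horizontality of every $\cK_2$-gerbe rests on Proposition~\ref{beilinson-lemma} (whose extension in the subsequent Remark gives $\R^jf_*\cK_j=0$ only for $j>n$, hence says nothing for $j=2$, $n=2$), and the norm map of \S\ref{Norm} from $\R^1\pi_*\cK_2$ to $\cO^*_S$ explicitly uses that codimension-one points of the total space map finitely onto the generic point of the base. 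For $p_1$, of relative dimension two, neither ingredient is available; worse, your projection formula asks to cup a pulled-back line bundle with a $\cK_2$-gerbe, which lands in $\cK_3$ $2$-gerbes, so the natural home of your Fubini argument is the pairing $\H^3(Y,\cK_3)\to \H^1(C_1,\cK_1)$ for the relative surface $Y/C_1$. That is precisely the higher-dimensional construction $\Psi^2_{Y/S}$ that the introduction defers to \cite{ER2}; it is not ``largely formal,'' and base change for $\cup$ and for $\pi_0(\pi_*(-))$ alone will not supply it.

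The paper avoids all of this. It first reduces, by additivity in $L$, to $L=\cO(x)$ for a closed point $x\in C_3$; the characterization (\ref{norm-finite}) then converts each pairing against $\alpha_3^*L$ (resp.\ $\beta_3^*L$) into a restriction along the slice $C_1\times C_2\times\{x\}\hookrightarrow C_1\times C_2\times C_3$. After this step both sides of the desired isomorphism are Deligne pairings for honest relative curves ($\pi_{13}$ and $\gamma_1$), and the identity $\pi_{12}\circ\iota_{12}=\id$ together with base-change functoriality of $\<{-,-}_f$ for the square comparing $\gamma_1$ with $\pi_{13}$ finishes the argument. To salvage your route you would either need to perform that reduction to points first (at which point you are reproducing the paper's proof), or actually construct the relative-dimension-two integration and prove categorical Fubini and projection formulas for it, which is a separate and substantially harder project.
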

\begin{proof} For any smooth projective morphism $f: Y\to B$ of relative dimension one and line bundles $L_1, L_2$ on $Y$, let $\<{L
_1, L_2}_{f} = \int_fG_{(L_1, L_2)}$ denote the Deligne line bundle on $B$. Our task is to prove the existence of a natural isomorphism for any $L \in \CHcat^1(C_3)$:
\begin{equation}\label{eq:101}
    \<{\<{\pi_{12}^*\cO(g), \pi_{23}^*\cO(h)}_{\pi_{13}}, \alpha^*_3L}_{\alpha_1} \iso \<{\cO(g), \gamma^*_{2}\<{\cO(h), \beta_3^*L}_{\beta_2}}_{\gamma_1}
\end{equation}
where the maps are
\[\begin{tikzcd}
  C_1\times C_3 \ar[r, "\alpha_3"]  \ar[d, "\alpha_1"] & C_3, & C_2\times C_3 \ar[r, "\beta_3"]  \ar[d, "\beta_2"] &C_3, & C_1\times C_2 \ar[r, "\gamma_2"]  \ar[d, "\gamma_1"] & C_2.\\
  C_1 &&C_2 &&C_1& 
\end{tikzcd} \]
By additivity in $L$, it suffices to consider the case $L = \cO(x)$ for a closed point $x = \Spec~F$ of $C_3$.  We put 
\begin{gather*}
\iota_1\colon C_1 \iso D =C_1 \times x \hookrightarrow C_1 \times C_3, \quad \iota_2\colon C_2 \iso E  = C_2 \times x \hookrightarrow C_2 \times C_3, \\
\iota_{12} \colon C_1 \times C_2 \iso C_1 \times C_2 \times x \hookrightarrow
C_1 \times C_2 \times C_2 = X.
\end{gather*} 
By (\ref{norm-finite}), the left-hand-side of (\ref{eq:101}) is
\[ \<{\<{\pi_{12}^*\cO(g), \pi_{23}^*\cO(h)}_{\pi_{13}}, \alpha^*_3L}_{\alpha_1} \iso N_{D/C_1}(\<{\pi_{12}^*\cO(g), \pi_{23}^*\cO(h)}_{\pi_{13}}\big |_D) \iso \iota_1^*\bigg( \<{\pi_{12}^*\cO(g), \pi_{23}^*\cO(h)}_{\pi_{13}}\bigg).\]
On the other hand, by (\ref{norm-finite}), the right-hand-side of (\ref{eq:101}) is
\[ \<{\cO(g), \gamma^*_{2}\<{\cO(h), \beta_3^*L}_{\beta_2}}_{\gamma_1} \iso \<{\cO(g), \gamma^*_{2}N_{{E}/C_2}(\cO(h)\big |_E)}_{\gamma_1} \iso \<{\cO(g), \gamma^*_{2}\iota_2^*\cO(h) }_{\gamma_1} \iso\<{\cO(g), \iota_{12}^*\pi_{23}^*\cO(h)}_{\gamma_1},\]
using $C_2 \iso E$ for the second isomorphism and the following diagram for the last isomorphism:
\[\begin{tikzcd}
  C_1\times C_2 \ar[d, "\gamma_2"]  \ar[r, "\iso"] &C_1 \times C_2 \times x \ar[r] &C_1 \times C_2 \times C_3 =X \ar[d, "\pi_{23}"]\\
   C_2 \ar[r, "\iso"] & E \ar[r]& C_2 \times C_3\,,
\end{tikzcd}
\]
where the top row is $\iota_{12}$ and the bottom one is $\iota_2$. 
As $\pi_{12}\circ \iota_{12}$ is the identity map on $C_1 \times C_2$, we have
\[
\<{\cO(g), \iota_{12}^*\pi_{23}^*\cO(h)}_{\gamma_1} \iso \<{\iota_{12}^* \pi_{12}^*\cO(g), \iota_{12}^*\pi_{23}^*\cO(h)}_{\gamma_1}.
\]  
The required natural isomorphism in (\ref{eq:101}), namely, 
\[\<{\iota_{12}^*\pi_{12}^*\cO(g), \iota_{12}^*\pi_{23}^*\cO(h)}_{\gamma_1} \iso \iota_1^*\bigg( \<{\pi_{12}^*\cO(g), \pi_{23}^*\cO(h)}_{\pi_{13}}\bigg) 
\] 
follows from functoriality: use the map of relative curves
\[\begin{tikzcd}
  C_1\times C_2 \ar[d, "\gamma_1"]  \ar[r, "\iso"] &C_1 \times C_2 \times x \ar[r] &C_1 \times C_2 \times C_3 =X \ar[d, "\pi_{13}"]\\
   C_1 \ar[r, "\iso"] & D \ar[r]& C_1 \times C_3\,,
\end{tikzcd}
\]
where the top row is still $\iota_{12}$ and the bottom one is now $\iota_1$.
This proves Lemma \ref{compos-eh?}. 
\end{proof}
Taking $C=C_1 = C_2=C_3$ in the above lemma, we obtain that 
\[ \tilde{\circ}: \CHcat^1(C\times C) \times \CHcat^1(C \times C) \to \CHcat^1(C \times C)\] 
is a monoidal functor of Picard categories and that
\[\tilde{T}: \CHcat^1(C \times C) \to \stEnd(\CHcat^1(C))\] is a functor of ring categories proving (iii).

This finishes the proof of Theorem~\ref{nuovo}.

\bibliographystyle{hamsalpha}
\bibliography{Deligne-bundle}

\end{document}